\documentclass{amsart}

\newtheorem{theorem}{Theorem}[section]
\newtheorem{lemma}[theorem]{Lemma}
\newtheorem{prop}[theorem]{Proposition}
\newtheorem{cor}[theorem]{Corollary}
\newtheorem*{thm}{Theorem}

\theoremstyle{definition}
\newtheorem{definition}[theorem]{Definition}
\newtheorem*{ack}{Acknowledgement}

\usepackage{enumerate}

\theoremstyle{remark}

\numberwithin{equation}{section}
\usepackage{mathtools}
\usepackage{mathrsfs}
\usepackage{float} 
\usepackage{graphicx}
\usepackage{tikz-cd}
\usepackage{float}
\usepackage{hyperref}
\usetikzlibrary{calc, intersections}
\begin{document}
	
	\title{General position on Severi--Brauer surfaces}
	
	\author{Jack Ritschel}
	\address{Heinrich Heine University Dusseldorf, Faculty of Mathematics and Natural Sciences, Mathematical Institute, 40204 Dusseldorf, Germany }
	\email{jack.ritschel@hhu.de}
	
	\begin{abstract}
		The blowing-up of the projective plane at a finite set of points yields a del Pezzo surface if and only if the points lie in general position. In this note, we generalize this result to Severi--Brauer surfaces over arbitrary ground fields. Using Galois descent, intersection theory and combinatorial arguments, we provide explicit arithmetic and geometric conditions on the centre of the blowing-up. 
	\end{abstract}
	
	\maketitle
	\tableofcontents
	
	\section*{Introduction}
	
	Smooth algebraic curves, surfaces, threefolds, etc., over a ground field $k$ can be broadly grouped into three classes, depending on whether their dualizing sheaf is ample, trivial, or anti-ample. An algebraic surface $X$ is called a \textit{del Pezzo} surface if it is smooth, proper, and its dualizing sheaf $\omega_X$ is anti-ample. The most basic example is the projective plane, with $\omega_{\mathbb{P}^2} = \mathscr{O}_{\mathbb{P}^2}(-3)$. 
	
	It is a well-understood problem to determine the conditions under which the blowing-up $ \operatorname{Bl}_{Z}(\mathbb{P}^2_k) \rightarrow \mathbb{P}^2_k$ at $r \ge 1$ rational points $Z = \{a_1, \ldots, a_r\}$ yields a del Pezzo surface. It turns out the points must lie in a certain configuration, called  \textit{general position}. We recall this in Theorem~\ref{P^2_del_Pezzo}.
	
	There exist twisted forms of projective space, the so-called \textit{Severi--Brauer varieties}. These are $k$-schemes for which there exists a field extension $k \subset L$ such that the base change becomes isomorphic to $\mathbb{P}^n_L$. When the Brauer group $\operatorname{Br}(k)$ of the field is not trivial, there may exist many such non-isomorphic schemes over $k$. In dimension two, we speak of Severi--Brauer surfaces. 
	
	Since the ampleness of invertible sheaves can be checked after base-change, Severi--Brauer surfaces are always del Pezzo surfaces. It is therefore natural to ask how the classical criteria adapt to this setting. In this note, we determine the conditions under which the blowing-up of a Severi--Brauer surface $\operatorname{Bl}_Z(X) \rightarrow X$ at a collection $Z = \{a_1, \ldots, a_r\}$  of $r \ge 1$ closed points yields a del Pezzo surface. 
	
	Here we consider closed points endowed with their reduced induced subscheme structure. We exclude non-reduced schemes as centres for blowing-ups, as this often results in singular surfaces.
	
	Severi--Brauer surfaces have different Chow groups $\operatorname{CH}(X)$ than the projective plane. In particular, the degree of any closed point $a \in X$ is a multiple of 3 (see Proposition~\ref{Non-Split-Chow}). Consequently, one is forced to blow up at non-rational points, which is a delicate matter.
	
	Because blowing-up commutes with flat base change, we can pass to the separable closure $k^{\mathrm{sep}}$, over which the surface $X_{k^{\mathrm{sep}}}\simeq\mathbb{P}^2_{k^{\mathrm{sep}}}$ splits. Thus we determine conditions under which the points $u^{-1}(Z)$ in the preimage of $u: X_{k^{\mathrm{sep}}} \rightarrow X$ lie in general position. 
	
	In the first section, we work out restrictions on $Z = \{a_1, \ldots, a_r\}$. By Proposition~\ref{Smooth_Blowing_Up}, the residue fields $k \subset \kappa(a_i)$ must all be separable. It follows from the classical version that the total degree of the points is bounded by 8 (see Proposition~\ref{degree_bounded}). Since every point also has degree divisible by 3, this leaves only three possibilities: $Z = \{a\}$ may consist of a single point of degree 3 or 6, or $Z = \{a, a^{\prime}\}$ consists of two points, of degree 3 each.
	
	In the second section, we deal with these three cases. Using combinatorial arguments and descent theory, we see that blowing up at a single point $Z = \{a\}$ of degree 3 or 6 always yields a del Pezzo surface (see Proposition~\refeq{6pts_general_position}). This was shown already in \cite{Weinstein 2022}. We have included a proof because the arguments set up the remaining case, which has not been dealt with before.
	
	Namely, if $Z = \{a, a^{\prime}\}$, the situation turns out to be more complicated. The conditions now depend on the arithmetic of the residue fields, and the configuration of the points in the preimage is more involved. In particular, a certain nodal curve plays a distinguished role. It arises as follows: Any degree 3 point $a \in X$ has as preimage a non-collinear triple. This triple yields a triangle $C_a$ of lines through the points. By descent, we get a nodal curve $C_a \subset X$, whose properties we examine in Proposition~\refeq{nodal_curve}. 
	
	Summarizing, when the centre $Z = \{a_1, \ldots, a_r\} \subset X$ is reduced and consists of closed points, we arrive at the following conditions: 
	
	\begin{thm}[see Thm.~\refeq{main_theorem}]
		Let $X$ be a non-split Severi--Brauer surface over $k$. The blowing-up $\operatorname{Bl}_Z(X) \rightarrow X$ along the centre $Z$ is a del Pezzo surface if and only if the residue field of each point is separable and one of the following holds: 
		\begin{enumerate}[(i)]
			\item $Z = \{a\}$ consists of a degree-3 or 6 point. 
			\item $Z = \{a, a^{\prime}\}$ consists of two degree-3 points, with $\kappa(a) \not\simeq \kappa(a^{\prime})$. 
			\item $Z = \{a, a^{\prime}\}$ consists of two degree-3 points, with $\kappa(a) \simeq \kappa(a^{\prime})$ such that $a^{\prime} \notin C_a$ and $a \notin C_{a^\prime}$.
		\end{enumerate}
	\end{thm}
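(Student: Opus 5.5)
The plan is to reduce everything to the split case. Blowing-up commutes with flat base change, and being del Pezzo can be checked after base change. So $\operatorname{Bl}_Z(X)$ is del Pezzo if and only if $Z$ is étale over $k$ and the finite set $u^{-1}(Z)\subset\mathbb{P}^2_{k^{\mathrm{sep}}}$ is in general position in the sense of Theorem~\ref{P^2_del_Pezzo}. Étaleness is exactly the separability of the residue fields, by Proposition~\ref{Smooth_Blowing_Up}. By Proposition~\ref{degree_bounded} and Proposition~\ref{Non-Split-Chow}, the total degree lies in $\{3,6\}$. This leaves the three configurations: one point of degree $3$, one point of degree $6$, or two points of degree $3$. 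With at most $6$ points, general position means only two things: no three points are collinear, and not all six lie on a conic.

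The key tool throughout is descent of curves. Any finite Galois-stable set of curves in $\mathbb{P}^2_{k^{\mathrm{sep}}}$ has a reduced union that descends to a curve on $X$. By Proposition~\ref{Non-Split-Chow}, the degree of that curve must be divisible by $3$.

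For a single point, the Galois group acts transitively on $u^{-1}(Z)$.
\begin{itemize}
\item In degree $3$, if the three points were collinear the line would be Galois-stable, giving a degree-$1$ curve on $X$, which is impossible.
\item In degree $6$, if the six points lay on a conic, we argue as follows. If no three are collinear the conic is unique, hence Galois-stable, and gives a degree-$2$ curve on $X$, which is impossible. The case with collinear triples is handled next.
\item Still in degree $6$, let $\mathcal{L}$ be the Galois-stable set of lines containing at least three of the points. Then $|\mathcal{L}|$ must be divisible by $3$. By transitivity, every point lies on the same number $m$ of lines of $\mathcal{L}$. A short incidence count, together with the fact that two lines meet in at most one point, rules out every nonempty configuration. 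For example, three lines forming a triangle with extra points on the sides would give non-constant $m$. A line with at least $4$ points would force further lines by transitivity, and the count is again incompatible.
\end{itemize}
I expect this combinatorial case analysis to be routine but fiddly.

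For $Z=\{a,a'\}$, write $T=u^{-1}(a)$ and $T'=u^{-1}(a')$. Each is a non-collinear triple by the degree-$3$ case. I would proceed in three steps.
\begin{enumerate}[(1)]
\item A collinear triple must mix $T$ and $T'$. It therefore contains two points of one triple and one of the other, and so lies on a side of the triangle $C_a$ or of $C_{a'}$. The vertices of $C_a$ are the points of $T$, which are distinct from those of $T'$. Using the description of $C_a$ in Proposition~\ref{nodal_curve} and Galois-stability, the absence of such triples is equivalent to $a'\notin C_a$ and $a\notin C_{a'}$.
\item A conic through all six points can be excluded once no three are collinear, since the conic is then unique and descends to a degree-$2$ curve. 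So conditions (i) and (iii) are exactly general position.
\item It remains to show that $\kappa(a)\not\simeq\kappa(a')$ forces $a'\notin C_a$ and $a\notin C_{a'}$. Suppose some point of $T'$ lies on a side of $C_a$. Each side of $C_a$ contains at most one point of $T'$, since otherwise two points of $T'$ would span that side, which contains $T$-points, creating a collinear triple within the degree-$3$ picture. Sending each point of $T'$ to the side containing it then gives a Galois-equivariant map from $T'$ to the set of sides. Both are transitive $3$-element Galois sets, so this map is a bijection. The set of sides is isomorphic, as a Galois set, to $T$ via the opposite vertex. Hence $T\simeq T'$ as Galois sets, that is, $\kappa(a)\simeq\kappa(a')$.
\end{enumerate}

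The main obstacle is making step (3) airtight: one must check that the incidence defines a well-defined equivariant map. This in turn requires carefully ruling out a point of $T'$ lying on two sides, which can only happen at a vertex and so is excluded by $T\cap T'=\emptyset$.
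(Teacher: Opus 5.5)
Your proposal is correct, but it organizes the combinatorics differently from the paper.

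\textbf{The paper's route.} The paper enumerates the nine configurations \textsc{A}--\textsc{I} of six points containing a collinear triple. It descends the union of \emph{all} lines through pairs of points and discards every configuration whose line count is prime to $3$. It then removes \textsc{B} by a fixed point. It removes \textsc{H} by transitivity for one point, and for two points by producing a Galois-equivariant bijection $u^{-1}(a)\simeq u^{-1}(a')$. The criterion $a'\notin C_a$, $a\notin C_{a'}$ is then read off from Proposition~\ref{nodal_curve}(ii), which is proved via the normalization of $C_a$ and its field of constants $\hat k$.

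\textbf{Your route.} You bypass the enumeration.
\begin{itemize}
\item For a degree-$6$ point, it suffices to descend only the lines with at least three points and to use that transitivity makes the incidence number $m$ constant. A line with $6$ points gives $|\mathcal{L}|=1$. A line with $5$ or $4$ points forces $m$ to be non-constant. If every line of $\mathcal{L}$ carries exactly three points, then $3|\mathcal{L}|=6m$ with $2m\le 5$, so $|\mathcal{L}|\in\{2,4\}$. You should write this count out, but it is short. Unlike the paper's argument, it does not depend on the exhaustiveness of the list \textsc{A}--\textsc{I}, which the paper only sketches.
\item For two points, you observe that every collinear triple contains two points of one orbit, hence lies on $C_a$ or $C_{a'}$. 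This gives the equivalence between general position and ``$a'\notin C_a$ and $a\notin C_{a'}$'' directly. The paper instead passes through configuration \textsc{H} and leaves this identification implicit.
\item Your incidence map $T'\to\{\text{sides of }C_a\}\simeq T$ is the Galois-set content of the paper's normalization argument, with no auxiliary field needed.
\end{itemize}
What the paper's route buys in exchange is an explicit catalogue of the degenerate configurations.

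\textbf{A sentence to delete.} In your step (3), the claim that a side of $C_a$ contains at most one point of $T'$ is not justified by the reason you give. A line through two points of $T$ and two of $T'$ produces no collinear triple inside $T$ or inside $T'$. You do not need the claim anyway:
\begin{itemize}
\item Every point of $T'$ lies on the Galois-stable triangle, and on exactly one side, since two sides meet only in a vertex. So the map is well defined and equivariant.
\item An equivariant map into a transitive Galois set is surjective, hence bijective between $3$-element sets.
\item ``One point per side'' then follows as a consequence.
\end{itemize}
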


	\begin{ack}I would like to express my gratitude to S.~Schroeer for suggesting this topic and for many helpful discussions throughout the preparation of this note, which was a part of my master's thesis. I am also grateful to O.~Overkamp for taking the time to discuss mathematical questions with me and for patiently clarifying points I did not understand.\end{ack}

	\section{From the projective plane to Severi--Brauer surfaces}
	
	To set up the classical result, we quickly collect some basic definitions in the theory of algebraic surfaces. A standard reference is Chapter 5 of \cite{Hartshorne}. Fix a ground field $k$ of characteristic $p \ge 0$. By an algebraic surface $X$, we mean a separated, finite type $k$-scheme of dimension two. 
	
	An important invariant of an algebraic surface is its \textit{numerical group} $\operatorname{Num}(X) = \operatorname{Pic}(X) / \operatorname{Pic}^{\tau}(X)$ of invertible sheaves modulo those which are numerically trivial. Consider the blowing-up $b: S = \operatorname{Bl}_{z}(X) \rightarrow X$ at a closed point $z \in X$. The \textit{exceptional divisor} is the effective divisor $E = b^{-1}(z)$. When $X$ is regular, there is a bijection of numerical groups $\operatorname{Num}(S) = \operatorname{Num}(X) \oplus \mathbb{Z}E$ (\cite[p.386]{Hartshorne}). For any effective divisor $C \subset X$, we write $b^{*}C = \tilde{C} + E$, where $\tilde{C}$ is the \textit{strict transform} of $C$. The following definition of the \textit{multiplicity} of $C$ at $z$ works well even when the point is not necessarily $k$-rational: 
	\begin{equation}\label{multiplicity}
		\operatorname{mult}_{z}(C) = \operatorname{length}(\mathscr{O}_{\tilde{C} \cap E}) = h^0(\mathscr{O}_{\tilde{C} \cap E}) [\kappa(z):k]^{-1}. 
	\end{equation} Let $X = \mathbb{P}^2_k$, and let $Z = \{z_1, \ldots, z_r\}$ consist of closed points. Identifying numerical groups, we can write any effective divisor $D \subset S$ as
	\[ \operatorname{Num}(S) = \operatorname{Num}(\mathbb{P}^2_k) \oplus \mathbb{Z} E_1 \oplus \ldots \oplus \mathbb{Z} E_r, \quad D = mH + n_1E_1 + \ldots + n_rE_r. \] Here, $H$ is the hyperplane divisor, $m = \deg(D)$, and $n_i = \operatorname{mult}_{z_i}(D)$. The canonical divisor can be identified as $K_S = -3H + E_1 + \ldots + E_r$.  Notice also that for a closed point $z \in X$ and $E = b^{-1}(z)$, the self-intersection keeps track of the degree, that is, $E^2 = -[\kappa(z):k]$ (see \cite[p.396]{Liu 2002}).
	
	By the \textit{Nakai--Moishezon} criterion (\cite[p.365]{Hartshorne}), $S$ is a del Pezzo surface if and only if $K_S^2 = 9 - r > 0$, and there are no curves $C \subset S$ with the property that $(K_S \cdot C) \ge 0$. When the centre $Z = \{z_1, \dots, z_r\} \subset \mathbb{P}^2_k$ consists of $k$-rational points, the following classical theorem, found, for example, in \cite[Proposition~2]{Demazure}, gives conditions on the number and geometry of points under which the blowing-up is a del Pezzo surface. If these conditions are satisfied, one also speaks of the points lying in \textit{general position}.
	
	\begin{theorem}\label{P^2_del_Pezzo}
		The blowing-up $S$ is a del Pezzo surface if and only if $r < 9$, no three or more points lie on a line, no six or more lie on a conic, and no seven lie on a singular cubic with an eighth point as an ordinary double point.
	\end{theorem}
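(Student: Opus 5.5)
The plan is to use the Nakai--Moishezon criterion as recalled above: $S$ is del Pezzo if and only if $K_S^2 = 9-r>0$ and $(K_S\cdot C)<0$ for every integral curve $C\subset S$. I write $C = mH - \sum n_iE_i$ in $\operatorname{Num}(S)$, with $m\ge 0$ and $n_i\ge 0$ when $C$ is not exceptional. Since $r<9$ is exactly $K_S^2>0$, it remains to show that, for $r\le 8$, the negativity condition on curves is equivalent to the three configuration conditions.

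\emph{Necessity.} For each forbidden configuration I exhibit a curve with $K_S\cdot C\ge 0$.
\begin{itemize}
\item If three points lie on a line $L$, its strict transform has class $H-E_i-E_j-E_k-\dots$, so $K_S\cdot\tilde L = -3+\#\ge 0$.
\item If six points lie on a conic $Q$, then $K_S\cdot\tilde Q = -6+\#\ge 0$.
\item Suppose seven points lie on a cubic that is singular at an eighth point. The cubic has multiplicity at least $2$ there, so $K_S\cdot\tilde C \ge -9+7+2=0$.
\end{itemize}
In each case I also check that the relevant curve really exists and is irreducible, or else reduce to an irreducible component. For instance, a reducible conic through six points forces three of them onto a line, which is already covered by the first bullet.

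\emph{Sufficiency.} Assume general position and let $C$ be an integral curve. Exceptional curves $E_i$ satisfy $K_S\cdot E_i=-1$. For a strict transform, $-K_S\cdot C = 3m-\sum n_i$, so I must show $\sum n_i<3m$. I combine two tools.
\begin{itemize}
\item Bézout against auxiliary curves. The unique conic through five of the points is irreducible, and through any point of multiplicity $n_i$ one can intersect with lines or conics through the other points. This bounds the $n_i$ in terms of $m$ whenever $C$ differs from the auxiliary curve.
\item Adjunction/genus. Since $C$ is integral, $p_a(C)\ge 0$ gives $(m-1)(m-2)\ge \sum n_i(n_i-1)$.
\end{itemize}
Cauchy--Schwarz, $(\sum n_i)^2\le r\sum n_i^2$, together with the genus bound, handles large $m$. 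For small $m$ ($m\le 3$, and possibly up to $m=6$ when $r=7,8$) I run a case analysis, and this is exactly where the excluded configurations enter.

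The main obstacle is this finite case analysis for $r=7,8$, where curves such as the cubics with a double point, or sextics with a triple point and seven double points, must be ruled out or shown to satisfy the strict inequality. My first approach is the standard trick of using the Cremona transformation: a quadratic transformation centred at three non-collinear points preserves general position and $K_S$, and it lowers $m$ whenever $n_1+n_2+n_3>m$. Induction on $m$ then reduces everything to $m\le 3$, where the stated conditions apply directly.
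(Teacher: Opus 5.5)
The paper gives no proof of this statement; it is quoted from Demazure, so your argument is an independent route. Your Nakai--Moishezon set-up and the necessity half, including the reductions for reducible conics and cubics, are fine.

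The gap is in sufficiency. The step that carries your induction, namely that a quadratic transformation centred at three of the points preserves general position, is asserted but not proved, and it is not formal. A violation for the new points $q_1,\dots,q_r$ is an integral curve $D\subset S$ with $K_S\cdot D\ge 0$ and new degree $m'=D\cdot H'\le 3$. Its old degree $D\cdot H=2m'-\sum_{i\le 3}D\cdot E_i'$ is a priori as large as $6$, where your inductive hypothesis is not yet available.

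It closes by a count using $r\le 8$:
\begin{itemize}
\item a bad line has $D\cdot H\le 2$;
\item a bad conic contains six of the $q_i$, at most $r-3\le 5$ of which lie off $\{q_1,q_2,q_3\}$, so $D\cdot H\le 3$;
\item a bad cubic passes through all eight $q_i$, so $D\cdot H\le 6-3=3$.
\end{itemize}
Since $D\neq E_i$ (as $K_S\cdot E_i=-1$), $D$ is then a line, conic or cubic violating general position of the original points. You should also record that a bad curve has $n_1+n_2+n_3\ge 9m/r>m$ for its three largest multiplicities, so the transformation really lowers $m$.

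In fact the Cremona step is unnecessary: the two inequalities you list already finish the proof, with no case analysis up to $m=6$. Work over $\bar k$; this is harmless, since ampleness can be checked there and the three conditions are linear conditions on rational points. If $C=mH-\sum n_iE_i$ is integral with $\sum n_i\ge 3m$, then Cauchy--Schwarz and $p_a(C)\ge 0$ give $9m^2/r-3m\le \sum n_i^2-\sum n_i\le (m-1)(m-2)$, i.e.\ $(9-r)m^2\le 2r$. Hence $m\le 2$ for $r\le 7$, and $m\le 4$ for $r=8$. The case $m=4$ would force equality in Cauchy--Schwarz with every $n_i=3/2$, which is impossible. For $m\le 3$ the same genus bound leaves exactly three possibilities: a line through three points, a smooth conic through six, or an irreducible cubic through all eight with a double point at one of them.

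Note that this double point may be a cusp. The hypothesis therefore has to be read as ``no eight on a cubic singular at one of them''; this is Demazure's formulation and the one your necessity bullet actually uses. If ``ordinary double point'' is taken literally as ``node'', a cuspidal cubic through seven of the points with its cusp at the eighth gives a $(-2)$-curve that the stated hypothesis does not exclude. So your base case $m\le 3$ does not follow from the conditions exactly as written.
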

	
	Now, let $X$ be a twisted form of $\mathbb{P}^2_k$, that is, a Severi--Brauer surface. If $X \not\simeq \mathbb{P}^2_k$, we call $X$ \textit{non-split}. We recall some elementary properties. Most importantly, non-split surfaces admit only a restrictive class of closed subschemes. In particular, the degree of any curve $C \subset X$ is a multiple of 3. The same holds for closed points $a \in X$. More precisely: 
	
	\begin{prop}\label{Non-Split-Chow}
		A Severi--Brauer surface $X$ is non-split if and only if $\operatorname{CH}_1(X) = (-K_S)\mathbb{Z}$ and $\operatorname{CH}_0(X) = \{a_0\}\mathbb{Z}$, where $a_0 \in X$ is a degree-3 point arising as the zero-locus of a generic section of the tangent sheaf $\mathscr{T}_X$. 
	\end{prop}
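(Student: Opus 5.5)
We reduce both directions to the behaviour of the degree map on cycles, using Galois descent along the separable closure. Write $u\colon X_{k^{\mathrm{sep}}}\to X$ and $\Gamma=\operatorname{Gal}(k^{\mathrm{sep}}/k)$, so that $X_{k^{\mathrm{sep}}}\simeq\mathbb{P}^2_{k^{\mathrm{sep}}}$. The degree of a curve $C\subset X$ is defined as the degree of $u^{-1}(C)\subset\mathbb{P}^2$; equivalently, $(-K_X\cdot C)/3$. The degree of a zero-cycle is $\deg$ of its pushforward to $\operatorname{Spec}k$. The plan has two parts. First we show that $X$ is non-split if and only if no curve has degree prime to $3$ and no zero-cycle has degree prime to $3$. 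Then we show that, in the non-split case, the Chow groups are generated by $-K_X$ and by the cycle $a_0$ of a degree-3 point.

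\textbf{Non-split forces divisibility by 3.} Suppose $C\subset X$ is a curve of degree $d$ with $3\nmid d$. Its class in $\operatorname{Pic}(X)\subset\operatorname{Pic}(X_{k^{\mathrm{sep}}})^{\Gamma}=\mathbb{Z}H$ is $dH$. Since $-K_X$ gives $3H$, some integral combination gives $H=\mathscr{O}(1)$ as an invertible sheaf defined over $k$. Its global sections then give an isomorphism $X\simeq\mathbb{P}^2_k$, because $h^0(\mathscr{O}(1))=3$ descends by flat base change and $\mathscr{O}(1)$ is very ample after base change. Hence $X$ is split.

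For zero-cycles, we use the standard fact that $X$ splits over a field extension $L$ if and only if $X(L)\neq\emptyset$ (Châtelet). The index of $X$ equals the index of the associated central simple algebra of degree $3$, so it is $1$ or $3$. If some zero-cycle had degree prime to $3$, the index would be $1$. For a degree-$3$ algebra, index $1$ means the algebra is split, which for period-dividing-degree $3$ follows from prime-degree considerations: a point of degree $e$ with $3\nmid e$ splits the algebra over an extension of degree prime to $3$, hence over $k$. Conversely, if $X\simeq\mathbb{P}^2_k$, then lines and rational points have degree $1$.

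\textbf{Generators in the non-split case.} For $\operatorname{CH}_1$, we identify $\operatorname{CH}_1(X)=\operatorname{Pic}(X)$ via regularity. This injects into $\operatorname{Pic}(\mathbb{P}^2_{k^{\mathrm{sep}}})=\mathbb{Z}H$, because $\operatorname{Pic}$ injects under field extension for proper geometrically integral varieties, using $H^0(\mathscr{O})=k$ and Hilbert 90. By the above, the image lies in $3\mathbb{Z}H$ and contains $-K_X$, so $\operatorname{CH}_1(X)=(-K_X)\mathbb{Z}$.

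For $\operatorname{CH}_0$, the degree map $\operatorname{CH}_0(X)\to\mathbb{Z}$ has image $3\mathbb{Z}$, since it contains $\deg a_0=3$ by the next paragraph. The key input is injectivity of the degree map, i.e.\ $A_0(X)=0$ for Severi--Brauer surfaces. I would cite this: either Merkurjev–Suslin/Panin's computation of the $K$-theory of Severi--Brauer varieties, or the Chow-group computation via the exact sequence of Karpenko. This injectivity is the main obstacle. To argue it directly, I would try the following. Any two closed points $x,y$ of degree $3$ with $\kappa(x)\simeq\kappa(y)$ lie, after base change to a cubic splitting field $L$, on a $k$-rationally defined pencil of curves of degree $3$. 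Rational equivalence along such pencils, combined with norm arguments from $\mathbb{P}^2_L$, where $\operatorname{CH}_0=\mathbb{Z}$, gives that $3x$ and $3y$ differ by torsion killed by $3$. Sharpening this to equality is the delicate step.

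\textbf{Existence of $a_0$.} It remains to exhibit $a_0$. The tangent sheaf $\mathscr{T}_X$ has rank $2$ and $c_2=3$, computed on $\mathbb{P}^2$ where $c(\mathscr{T})=(1+h)^3$. Over an infinite field, a generic section vanishes on a reduced zero-dimensional subscheme of length $3$, by Bertini after base change. Since every point has degree divisible by $3$, this subscheme is a single degree-$3$ point. For finite $k$, $\operatorname{Br}(k)=0$, so the statement is vacuous on the non-split side.
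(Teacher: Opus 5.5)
Your argument is correct, granted the cited injectivity of the degree map, but it follows a different route from the paper. The paper gives no argument of its own and refers to Koll\'ar's geometric treatment. You instead build the statement from standard ingredients:
\begin{enumerate}[(1)]
\item injectivity of $\operatorname{Pic}(X)\to\operatorname{Pic}(X_{k^{\mathrm{sep}}})=\mathbb{Z}H$ (Hilbert 90), together with the fact that a descended $\mathscr{O}(1)$ splits $X$, which settles $\operatorname{CH}_1$;
\item the index of the associated degree-$3$ algebra, which gives $3\mid\deg$ for every zero-cycle;
\item the eigenlines of a generic element of the algebra, viewed as a vector field, which produce the separable degree-$3$ point $a_0$. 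Its reducedness is a direct computation with a regular semisimple matrix, valid in every characteristic, rather than Bertini.
\end{enumerate}
This gives an essentially elementary proof of everything the paper later uses, namely that curves and closed points on a non-split $X$ have degree divisible by $3$. The one genuinely nontrivial input, $A_0(X)=0$, is needed only for the literal equality $\operatorname{CH}_0(X)=\mathbb{Z}a_0$, and the paper never exploits that.

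Your pencil sketch for $A_0(X)=0$ is not a proof and should be dropped. For a surface there is a short complete argument. By Riemann--Roch without denominators, the map $\operatorname{CH}^2(X)\to\operatorname{gr}^2K_0(X)=F^2K_0(X)\subset K_0(X)$ is injective, since its kernel is killed by $(2-1)!=1$. Quillen's computation $K_0(X)\simeq\bigoplus_{i=0}^{2}K_0(A^{\otimes i})\simeq\mathbb{Z}^3$, with $A$ the associated central simple algebra, then shows that $\operatorname{CH}_0(X)$ is torsion-free. On the other hand, take $L=\kappa(a_0)$, a cubic splitting field, and $p\colon X_L\to X$. Then $p_*p^*=3$ and $A_0(X_L)=A_0(\mathbb{P}^2_L)=0$, so $3A_0(X)=0$. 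Hence $A_0(X)=0$, and $\operatorname{CH}_0(X)=\mathbb{Z}a_0$. The appropriate reference for the $K_0$ computation is Quillen (with Panin for general twisted flag varieties), rather than Merkurjev--Suslin.
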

	
	As a consequence, a non-split Severi--Brauer surface does not contain any curve or closed point of degree coprime to 3. For details, see \cite[p.24]{Kollar 2025}. Therefore, we are tasked with blowing up non-rational points, which is a delicate matter. Instead, we reduce to the classical case. This is possible because of the following two lemmas, of which the first can be found in \cite[0805]{Stacks}.
	
	\begin{lemma}
		Let $u: X^{\prime} \rightarrow X$ be a flat morphism of schemes. Let $Z \subset X$ be a closed subscheme, with preimage $u^{-1}(Z) \subset X^{\prime}$. Then there is a Cartesian diagram of schemes
		\begin{equation*}
			\begin{tikzcd}
				\operatorname{Bl}_{u^{-1}(Z)}(X^{\prime}) \arrow[r] \arrow[d, "b^{\prime}"] & \operatorname{Bl}_{Z}(X) \arrow[d, "b"] \\
				X^{\prime} \arrow[r, "u"]                                               & X                                    
			\end{tikzcd}
		\end{equation*}
	\end{lemma}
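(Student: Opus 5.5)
The plan is to reduce to the affine case and use the explicit description of blowing-ups as homogeneous spectra of Rees algebras. Blowing-up is compatible with Zariski localization on the base, so the question is local on $X$. Cover $X$ by affine opens $U = \operatorname{Spec}(A)$, and cover $u^{-1}(U)$ by affine opens $\operatorname{Spec}(A^{\prime})$. Once the Cartesian square is established over each such pair, the squares glue to the global one, because the comparison morphism is canonical.

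For the affine step, let $I \subset A$ be the ideal of $Z \cap U$, so that $\operatorname{Bl}_{Z\cap U}(U) = \operatorname{Proj}\bigl(\bigoplus_{n\ge 0} I^n\bigr)$. The preimage $u^{-1}(Z)$ is cut out by the extended ideal $IA^{\prime}$. The key algebraic input is flatness of $A \to A^{\prime}$. It gives
\[ I^n \otimes_A A^{\prime} \to I^nA^{\prime} \]
an isomorphism for every $n$, since tensoring the inclusion $I^n \hookrightarrow A$ with $A^{\prime}$ stays injective and its image is $I^nA^{\prime}$. Hence the Rees algebra satisfies
\[ \bigoplus_{n} (IA^{\prime})^n \simeq \Bigl(\bigoplus_n I^n\Bigr) \otimes_A A^{\prime}, \]
as graded $A^{\prime}$-algebras. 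Taking $\operatorname{Proj}$ commutes with base change of the degree-zero ring, since $\operatorname{Proj}(R\otimes_A A^{\prime}) = \operatorname{Proj}(R)\times_{\operatorname{Spec} A}\operatorname{Spec} A^{\prime}$. This identifies $\operatorname{Bl}_{u^{-1}(Z)}(\operatorname{Spec} A^{\prime})$ with the fibre product, compatibly with the projections.

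For the global statement, I would construct the top horizontal arrow intrinsically via the universal property of blowing-up. The pullback of the exceptional divisor of $b$ to $\operatorname{Bl}_Z(X)\times_X X^{\prime}$ is an effective Cartier divisor, because flat base change preserves non-zero-divisors. That divisor is the inverse image of $u^{-1}(Z)$, so there is a unique morphism $\operatorname{Bl}_Z(X)\times_X X^{\prime} \to \operatorname{Bl}_{u^{-1}(Z)}(X^{\prime})$. The local computation shows it is an isomorphism, and uniqueness guarantees the local isomorphisms glue.

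The main obstacle is only bookkeeping: making sure the Rees algebra commutes with base change. That is exactly where flatness enters, since $I^n\otimes A^{\prime}\to A^{\prime}$ must be injective. Everything else is formal. Alternatively, one may simply cite \cite[0805]{Stacks}, as the paper indicates.
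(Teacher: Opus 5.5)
Your proposal is correct and follows essentially the standard argument. The paper gives no proof of its own and only cites the Stacks Project (tag 0805), whose proof is exactly yours: flatness gives $I^n\otimes_A A^{\prime}\simeq I^nA^{\prime}$, so the Rees algebra commutes with the base change, and $\operatorname{Proj}$ commutes with base change. Your use of the universal property to build a canonical comparison morphism and glue the local isomorphisms is a sound way to globalize.
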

	
	In other words, blowing-up commutes with flat base change. Next, we can check ampleness of an invertible sheaf $\mathscr{L}$ after base change. The following result is a special case of \cite[0D2P]{Stacks}. 
	
	\begin{lemma}
		Let $X$ be a separated, quasi-compact scheme over $k$, and $\mathscr{L}$ an invertible sheaf on $X$. For any field extension $k \subset K$, the pullback sheaf $\mathscr{L}_K$ on $X_K$ is ample if and only if $\mathscr{L}$ is ample.
	\end{lemma}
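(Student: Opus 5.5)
The plan is to reduce ampleness to the case of an algebraically closed field and then to the affine case via Serre's cohomological criterion. Ampleness is not absolute in general, so some finiteness is needed. The statement assumes $X$ separated and quasi-compact over $k$. In the intended application $X$ is of finite type, even proper, so I will use a cohomological characterization whenever it is available and fall back on the definition otherwise.

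The easy direction comes first: if $\mathscr{L}$ is ample, then $\mathscr{L}_K$ is ample. By definition, $X$ is covered by the nonvanishing loci $X_s$ of sections $s \in \Gamma(X,\mathscr{L}^{\otimes n})$ with $X_s$ affine. Flat base change gives $\Gamma(X_K, \mathscr{L}_K^{\otimes n}) = \Gamma(X,\mathscr{L}^{\otimes n}) \otimes_k K$. Moreover $(X_K)_{s \otimes 1} = (X_s)_K$, which is affine, and these sets cover $X_K$. So $\mathscr{L}_K$ is ample.

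For the converse, suppose $\mathscr{L}_K$ is ample and write $p: X_K \to X$ for the projection. I would use the characterization that $\mathscr{L}$ is ample if and only if, for every quasi-coherent sheaf $\mathscr{F}$ of finite type, $H^i(X, \mathscr{F} \otimes \mathscr{L}^{\otimes n}) = 0$ for $i>0$ and $n \gg 0$. This is Serre's criterion, valid for proper $X$ over a field. Flat base change gives
\begin{equation*}
H^i(X_K, p^*\mathscr{F} \otimes \mathscr{L}_K^{\otimes n}) = H^i(X, \mathscr{F}\otimes\mathscr{L}^{\otimes n}) \otimes_k K.
\end{equation*}
The left-hand side vanishes for $n \gg 0$ because $\mathscr{L}_K$ is ample and $p^*\mathscr{F}$ is coherent. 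Since $K$ is faithfully flat over $k$, the right-hand factor vanishes too, and so $\mathscr{L}$ is ample.

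The main obstacle is removing the properness hypothesis, since Serre's criterion in this form needs it. In the general quasi-compact separated case, I would first try to show that some power $\mathscr{L}^{\otimes n}$ has enough sections to cover $X$ by affine loci. One route is to use faithfully flat descent of affineness: a scheme $U$ is affine if $U_K$ is affine, by fpqc descent along $K/k$. The remaining step is to show that finitely many sections of the form $\sum s_j \otimes \lambda_j$ can be replaced by sections defined over $k$. For this I would reduce to a finitely generated subextension of $K/k$, spread out over a $k$-variety, and specialize to a closed point, i.e.\ a finite extension. That in turn reduces to the finite case, where norms $N_{K/k}(s)$ give sections of $\mathscr{L}^{\otimes [K:k]}$ defined over $k$ with the same nonvanishing locus as the product of the conjugates. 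This descent step is the delicate part. For the paper's purposes, which concern proper surfaces, the cohomological argument of the previous paragraph already suffices.
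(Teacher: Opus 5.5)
Your forward direction is correct, and so is your converse when $X$ is proper. Flat base change on the quasi-compact separated $X$, together with the faithful flatness of $k \subset K$, gives exactly what you claim. The paper only cites the Stacks Project for this lemma and applies it only to $\operatorname{Bl}_Z(X)$, which is projective over $k$, so your proper case covers the paper's actual use.

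The lemma, however, is stated for arbitrary separated quasi-compact $X$, and there your converse has a genuine gap. Off the proper case, ampleness does not imply Serre vanishing. On $X=\mathbb{A}^2_k\setminus\{0\}$ the sheaf $\mathscr{O}_X$ is ample because $X$ is quasi-affine, yet $H^1(X,\mathscr{O}_X^{\otimes n})=H^1(X,\mathscr{O}_X)\neq 0$ for every $n$. So ampleness of $\mathscr{L}_K$ gives you no vanishing on $X_K$.

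Your fallback sketch does not repair this. Besides the unstated limit argument needed to spread out affineness when $X$ is not of finite type, the norm step fails as described. Let $\pi\colon X_\kappa\to X$ be the projection, and suppose the loci $(X_\kappa)_{t_i}$ cover $X_\kappa$. The norms only give $X_{N(t_i)}=X\setminus\pi(V(t_i))$, so a point whose fibre is split between different $t_i$ is missed. For instance, over $k=\mathbb{R}$ the sections $T_0\pm iT_1$ of $\mathscr{O}(1)$ cover $\mathbb{P}^1_{\mathbb{C}}$ by affines. Both have norm $T_0^2+T_1^2$, which vanishes at a closed point of $\mathbb{P}^1_{\mathbb{R}}$. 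You would need, for each $x\in X$, one section $t$ with $(X_\kappa)_t$ affine containing all of $\pi^{-1}(x)$, and that requires a further prime-avoidance argument.

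The missing idea is that your own strategy works in full generality once you replace Serre vanishing by global generation. On a quasi-compact quasi-separated scheme, $\mathscr{L}$ is ample if and only if, for every quasi-coherent $\mathscr{F}$ of finite type, $\mathscr{F}\otimes\mathscr{L}^{\otimes n}$ is globally generated for $n\gg 0$. This is one of the standard characterizations of ampleness in the Stacks Project, chapter Properties.

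So suppose $\mathscr{L}_K$ is ample and $\mathscr{F}$ is of finite type on $X$. Apply the criterion to $p^*\mathscr{F}$ on the quasi-compact separated $X_K$, and put $\mathscr{G}=\mathscr{F}\otimes\mathscr{L}^{\otimes n}$. Flat base change gives $\Gamma(X_K,p^*\mathscr{G})=\Gamma(X,\mathscr{G})\otimes_k K$. Hence the pullback of the evaluation map $\Gamma(X,\mathscr{G})\otimes_k\mathscr{O}_X\to\mathscr{G}$ is the evaluation map on $X_K$, which is surjective for $n\gg 0$. Its cokernel therefore pulls back to zero along the faithfully flat $p$, so it is zero. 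This proves the converse for all separated quasi-compact $X$, with no properness, spreading out, or norms.
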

	
	These two results allow us to apply Theorem~\ref{P^2_del_Pezzo} to the case of Severi--Brauer surfaces $X$ as follows. Let $Z = \{a_1, \dots, a_r\}$ consist of $r \ge 1$ closed points. As $X_{k^{\mathrm{sep}}}$ is isomorphic to $\mathbb{P}^2_{k^{\mathrm{sep}}} $, it suffices to check whether
	\begin{equation*}
		\operatorname{Bl}_Z(X)_{k^\mathrm{sep}} = \operatorname{Bl}_{u^{-1}(Z)} (X_{k^{\mathrm{sep}}}) 
	\end{equation*} is a del Pezzo surface. Therefore, we now seek to understand when the points in the preimage $u^{-1}(Z) \subset X_{k^{\mathrm{sep}}}$ lie in general position. That is, we ask if they fulfil the conditions of Theorem~\ref{P^2_del_Pezzo}. 
	
	An immediate constraint on the residue fields $k \subset \kappa(a_i)$ is that they must all be separable. Of course, this is only relevant in characteristic $p > 0$. We prove something more general:
	
	\begin{prop}\label{Smooth_Blowing_Up}
		Let $X$ be a smooth scheme of finite type over $k$ of pure dimension $n \ge 2$. Let $a \in X$ be a closed point whose residue field $k \subset \kappa(a)$ is not a separable extension. Then the blowing-up $\operatorname{Bl}_a(X)$ is not smooth. 
	\end{prop}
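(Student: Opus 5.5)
The strategy is to pass to an algebraic closure and use the fact that smoothness is stable under base change, so any non-smoothness on the base change descends. Suppose $S=\operatorname{Bl}_a(X)$ were smooth over $k$. Let $\bar k$ be an algebraic closure and $u:X_{\bar k}\to X$ the projection, which is flat. By the first lemma above (blowing-up commutes with flat base change), $S_{\bar k}=\operatorname{Bl}_{u^{-1}(a)}(X_{\bar k})$. Smoothness of $S$ over $k$ would then imply that $S_{\bar k}$ is smooth, in particular regular.

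Next I would analyse the centre $u^{-1}(a)=\operatorname{Spec}(\kappa(a)\otimes_k\bar k)$. Because $\kappa(a)/k$ is not separable, this Artinian scheme is non-reduced. It is supported at finitely many $\bar k$-points $x_1,\dots,x_m$, and since $\operatorname{Gal}$-type symmetry (more precisely $\operatorname{Aut}(\bar k/k)$) permutes these points transitively, the local Artinian rings at all $x_j$ are isomorphic, and each has length $[\kappa(a):k]_{\mathrm{insep}}>1$. So near $x=x_1$ we are blowing up the smooth $n$-dimensional variety $X_{\bar k}$ along a non-reduced zero-dimensional subscheme $W$ supported at $x$, whose ideal $I$ has length $\ell(\mathscr{O}/I)\ge 2$.

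The key step is to show that the blowing-up of a regular local ring of dimension $n\ge2$ along an $\mathfrak m$-primary ideal $I$ can only be regular when $I$ is (up to integral closure considerations) a power of $\mathfrak m$. More precisely, I would argue as follows. If $\operatorname{Bl}_I$ is regular, its exceptional fibre is a Cartier divisor, and the blow-up is a proper birational morphism of regular surfaces/varieties isomorphic outside $x$. By the factorization theory (Zariski for $n=2$: every such morphism factors into point blow-ups; for general $n$ I would reduce to $n=2$ by cutting with a general smooth surface through $x$, or argue directly) one expects contradictions. A cleaner route I would try first: the ideal $I=\mathfrak m$-primary coming from a purely inseparable structure. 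Indeed $\kappa(a)\otimes\bar k$ has the form $\bar k[t_1,\dots,t_s]/(t_1^{p^{e_1}},\dots)$, so locally $I=(f_1^{p^{e_1}},\dots,f_s^{p^{e_s}}, f_{s+1},\dots,f_n)$ for a regular system of parameters $f_i$, with some $e_i\ge1$. Such an ideal is a complete intersection, and the blow-up of a complete intersection ideal $(g_1,\dots,g_n)$ is the subscheme of $X\times\mathbb P^{n-1}$ cut out by $g_iT_j-g_jT_i$. In the chart $T_1=1$ with $g_1=f_1^{p^{e}}$, we get $g_j=f_1^{p^e}T_j$; at the point of the exceptional fibre with all $T_j=0$ (for $j\ge2$), the local ring is generated by $f_1,T_2,\dots,T_n$ modulo relations, and a tangent space computation shows the Zariski tangent space has dimension exceeding $n$ when $p^e\ge2$ and $n\ge2$: e.g.\ the equation $f_2^{p^{e_2}}=f_1^{p^{e_1}}T_2$ (or $f_2=f_1^{p^{e}}T_2$) has no linear term in variables other than expressing $f_2$, so $f_1,f_2,\dots,f_n,T_2,\dots,T_n$ minus at most the eliminable ones leaves more than $n$ generators of $\mathfrak m/\mathfrak m^2$. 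This contradicts regularity.

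The main obstacle is making the local form of $I$ precise: showing that $\mathscr{O}_{X_{\bar k},x}/I\cong \kappa(a)\otimes_k\bar k$ localized, and that this presentation can be chosen with the $f_i$ a regular system of parameters. I would handle it using that $\kappa(a)=k(\alpha_1,\dots)$ with separable part split off over $\bar k$ and purely inseparable part generated by elements with $p$-power minimal polynomials, lifting generators to the local ring at $a$ and base-changing; choosing the points in the appropriate chart, where the tangent-space count is a routine calculation.
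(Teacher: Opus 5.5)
Your overall strategy is the same as the paper's: base change, use that blowing-up commutes with flat base change, observe that the centre becomes a non-reduced complete intersection, and show the blow-up is not regular. The paper base changes to $L=\kappa(a)$ and cites O'Carroll et al., Theorem~2.1: the blow-up along an ideal generated by a regular sequence is regular iff the quotient is regular.

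\textbf{The gap.} The tangent-space count you substitute for that citation fails, because the point you pick is regular in the simplest case. Take $n=2$ and $I=(x^p,y)$. This is what $\kappa(a)=k(t^{1/p})$ produces, and also what every inseparable point of degree $3$ or $6$ produces, since the inseparable degree is then the prime $p$, so $s=1$. In the chart $T_1=1$ the relation is $y=x^pT_2$, so this chart is $\operatorname{Spec}\bar k[x,T_2]$, which is smooth. Your own sentence ``the local ring is generated by $f_1,T_2,\dots,T_n$'' already bounds the tangent space by $n$. Whenever $s=1$, the relations $f_j=f_1^{p^e}T_j$ for $j\ge2$ have independent linear parts and eliminate every $f_j$, so the count ``more than $n$'' is false; it only works when $s\ge2$. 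The singular point sits in another chart. For $T_2=1$ the relation $f_1^{p^e}=f_2T_1$ has no linear term, so at $T_1=0$ the tangent space has dimension at least $n+1$. For $n=2$ this is the $A_{p-1}$ singularity $x^p=yT_1$.

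\textbf{The normal form is unproved and unnecessary.} Your form for $I$ rests on the unproved claim that $\kappa(a)\otimes_k\bar k$ is a truncated polynomial algebra. All you need is the following:
\begin{itemize}
\item $I=\mathfrak m_a\mathscr O_{X_{\bar k},x}$ is generated by the images $g_1,\dots,g_n$ of a regular system of parameters at $a$, which remain a regular sequence by flatness.
\item $I\ne\mathfrak m_x$, because the fibre is non-reduced.
\end{itemize}
Hence $\phi\colon\bar k^n\to\mathfrak m_x/\mathfrak m_x^2$, $e_i\mapsto\bar g_i$, has a nonzero kernel vector $v$.

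Take a point $c$ of the exceptional fibre $\mathbb P^{n-1}$ and work in a chart $T_i=1$ with $c_i=1$. The blow-up is cut out of the regular ambient $\operatorname{Spec}\mathscr O_{X_{\bar k},x}[T_j]_{j\ne i}$, of dimension $2n-1$, by the $n-1$ equations $g_j-g_iT_j=(g_j-c_jg_i)-g_i(T_j-c_j)$. Their linear parts are $\phi(e_j-c_je_i)$, and the vectors $e_j-c_je_i$ span the hyperplane $\{\lambda:\sum_l c_l\lambda_l=0\}$. Since the blow-up has dimension $n$, it is regular at $c$ iff these linear parts are independent, i.e.\ iff $\phi$ is injective on that hyperplane. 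So the blow-up is singular at every $c$ with $\sum_l c_lv_l=0$, and such $c$ exist precisely because $n\ge2$. This gives the direction of the cited theorem that is needed, and it repairs your argument.
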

	
	\begin{proof}
		Recall that a finite-type $k$-scheme $X$ is smooth if and only if it is geometrically regular at all points (\cite[038X]{Stacks}). We aim to show that $\operatorname{Bl}_a(X)$ is not geometrically regular. 
		
		Let $L = \kappa(a)$. First, we reduce to purely inseparable field extensions as follows. Any finite extension $k \subset L$ factors into a separable and purely inseparable part \[ [L:k] = [L_{\mathrm{sep}}:k] \cdot [L:L_{\mathrm{sep}}] = m \cdot p^n, \] where $L_{\mathrm{sep}}$ is the separable closure of $k$ in $L$, and $m,n \ge 1$ (\cite[030K]{Stacks}). For the base change $u: X_L \rightarrow X$, the preimage $u^{-1}(a) = \{z_1, \ldots, z_m\}$ consists of $m$ points of length $p^n$ each. It suffices to show that blowing up at one of these $z_i$ results in a singular surface. We may thus assume that $k = k^{\mathrm{sep}}$ and $[L:k] = p^n$ is purely inseparable. This way, $u^{-1}(a) = \{z\} \subset X_L$ consists of a single point. 
		
		The question reduces to a problem in local algebra. Let $X = \operatorname{Spec}(A)$ be the spectrum of the regular local ring $A = \mathscr{O}_{X,a}$. Denote its maximal ideal by $\frak{m}$. The base change is $X_L = \operatorname{Spec}(A \otimes_k L)$, and the schematic preimage $u^{-1}(a)$ is the spectrum of the quotient $A \otimes_k L / \frak{m} \otimes_k L$.
		
		We apply \cite[Theorem~2.1]{O'Carrol et al 1997}. The authors restrict themselves to working over fields of characteristic zero. This leads them to somewhat convolutely use the terms smooth and regular interchangeably. The proof, however, extends ad verbatim to our case. The assertion states that the blowing-up $\operatorname{Bl}_{u^{-1}(a)}(X_L) = \operatorname{Bl}_a(X) \otimes_k L$ is smooth if and only if the quotient $A \otimes_k L / \frak{m} \otimes_k L = L \otimes_k L$ is regular. As $k \subset L$ is purely inseparable, $L \otimes_k L$ contains non-zero nilpotents. Consequently, the quotient is not regular. We conclude that $\operatorname{Bl}_a(X)$ is not geometrically regular, hence not smooth over $k$.
	\end{proof}
	
	We return to Severi--Brauer surfaces $X$ and $Z = \{a_1, \ldots, a_r\} \subset X$ consisting of closed points. In the split case, with all points $k$-rational, the total number of points must be less than 9, by Theorem~\ref{P^2_del_Pezzo}. While non-split Severi--Brauer surfaces do not contain $k$-rational points, there is an analogue statement: 
	
	\begin{prop}\label{degree_bounded}
		Let $X$ be a Severi--Brauer surface with centre $Z$ as above. If the total degree $\deg(a_1) + \ldots + \deg(a_r) \ge 9$, the blowing-up $S = \operatorname{Bl}_{Z}(X)$ cannot be a del Pezzo surface. 
	\end{prop}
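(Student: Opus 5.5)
The plan is to compute the self-intersection of the canonical divisor and apply Nakai--Moishezon: if $K_S^2 \le 0$, then $-K_S$ is not ample and $S$ is not del Pezzo. First I dispose of the inseparable case. If some residue field $\kappa(a_i)$ is not separable over $k$, Proposition~\ref{Smooth_Blowing_Up} shows $S$ is not smooth, hence not del Pezzo. So assume all $\kappa(a_i)$ are separable.

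Under this assumption, pass to the separable closure via $u: X_{k^{\mathrm{sep}}} \to X$. Since $k \subset \kappa(a_i)$ is separable of degree $d_i = \deg(a_i)$, the scheme $\kappa(a_i) \otimes_k k^{\mathrm{sep}}$ is a product of $d_i$ copies of $k^{\mathrm{sep}}$. Hence $u^{-1}(a_i)$ consists of exactly $d_i$ reduced rational points, and the points coming from distinct $a_i$ are distinct. Thus $u^{-1}(Z)$ is a reduced set of $N = d_1 + \dots + d_r \ge 9$ rational points of $X_{k^{\mathrm{sep}}} \simeq \mathbb{P}^2_{k^{\mathrm{sep}}}$.

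By the flat base change lemma for blowing-ups, $S_{k^{\mathrm{sep}}} = \operatorname{Bl}_{u^{-1}(Z)}(\mathbb{P}^2_{k^{\mathrm{sep}}})$. Theorem~\ref{P^2_del_Pezzo} requires fewer than $9$ points, so this surface is not del Pezzo. Concretely, $K_{S_{k^{\mathrm{sep}}}}^2 = 9 - N \le 0$, so $-K$ fails the Nakai--Moishezon criterion.

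It remains to descend. Formation of the dualizing sheaf commutes with the flat base change $k \subset k^{\mathrm{sep}}$, so $\omega_S \otimes k^{\mathrm{sep}} = \omega_{S_{k^{\mathrm{sep}}}}$. By the ampleness lemma, $\omega_S^{-1}$ is ample if and only if its pullback is ample. The pullback is not ample, so $\omega_S^{-1}$ is not ample and $S$ is not del Pezzo.

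One could also argue directly over $k$ without descent. The formula $E_i^2 = -\deg(a_i)$ gives $K_S^2 = K_X^2 + \sum E_i^2 = 9 - \sum d_i \le 0$. This computation holds provided $S$ is regular, which again reduces to the separability discussion.

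The only delicate point is the separability reduction. Without it, $u^{-1}(a_i)$ need not be $d_i$ reduced points. That case is already covered by Proposition~\ref{Smooth_Blowing_Up}, so I expect no real obstacle.
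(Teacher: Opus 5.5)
Your argument is correct, but your main line differs from the paper's. The paper's proof is essentially the direct alternative you sketch in your penultimate paragraph. It works over $k$ throughout: it writes $K_S = b^*K_X + E_1 + \dots + E_r$ and uses $E_i^2 = -[\kappa(a_i):k]$, which is valid for non-rational points since $E_i \simeq \mathbb{P}^1_{\kappa(a_i)}$. This gives $K_S^2 = 9 - \sum_i \deg(a_i) \le 0$, and Nakai--Moishezon finishes the proof in one step.

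Your route instead splits each $a_i$ into $d_i$ rational points over $k^{\mathrm{sep}}$ and applies the classical bound (or $K^2 = 9 - N$) there. You then descend non-ampleness using the base-change compatibility of $\omega$ and the ampleness lemma. This avoids the intersection formula for exceptional divisors over non-rational points. The cost is a separability case distinction and two compatibility statements that the paper does not need.

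One correction to your side remark: the direct computation does not depend on separability. The blow-up of a regular surface at a closed point is always regular. What inseparability destroys is only smoothness, i.e.\ geometric regularity, which is why Proposition~\ref{Smooth_Blowing_Up} is phrased in terms of smoothness. So on the paper's route the inseparable case needs no separate treatment. In any event, a non-smooth $S$ is not del Pezzo by definition.
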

	
	\begin{proof}
		Similarly to the case of the projective plane, we can identify the dualizing sheaf as
		$K_S = -K_X + E_1 + \ldots + E_r$, with $E_i^2 = -[\kappa(a_i):k]$. By the Nakai--Moishezon criterion, we must ensure that $K_S^2 = 9 - \sum_i [\kappa(a_i):k] > 0$ for $K_S$ to be anti-ample. The statement follows.
	\end{proof}
	
	Having collected all intermediate results, we now determine the possible blowing-up centres and examine them in detail. 
	
	\section{Points in geometric general position}
	
	For a non-split Severi--Brauer surface $X$ and $Z = \{a_1, \ldots, a_r\}$ as above, we wish to understand whether $\operatorname{Bl}_Z(X)$ is a del Pezzo surface. Equivalently, we ask whether the blowing-up of the pullback $\operatorname{Bl}_{u^{-1}(Z)}(X_{k^{\mathrm{sep}}})$ along the preimage $u^{-1}(Z)$ is a del Pezzo surface. To this end, we say that the points of $Z$ lie in \textit{geometric general position} if the points in the preimage $u^{-1}(Z) \subset X_{k^{\mathrm{sep}}}$ lie in general position in the sense of Theorem~\ref{P^2_del_Pezzo}. 
	
	The possible blowing-up centres $Z$ are highly restricted. Indeed, by Proposition~\ref{Non-Split-Chow} and Proposition~\ref{degree_bounded}, respectively, we have $3 \mid \deg(a_i)$, and the total degree is at most 8. Thus, we are left with the following three possibilities: $Z = \{a\}$ may consist of a single point of degree 3 or 6, or $Z = \{a, a^{\prime}\}$ consists of two points of degree 3 each. Further, by Proposition~\ref{Smooth_Blowing_Up}, the respective residue fields are always separable. 
	
	Let us start with the simplest case, where $Z = \{a\}$ consists of a degree-3 point. We seek to understand whether the points in the preimage $u^{-1}(a) = \{z_1, z_2, z_3\} \subset X_{k^{\mathrm{sep}}}$ are collinear. That is, we ask if they are contained in a line $L \subset X_{k^{\mathrm{sep}}}$. We notice that the points in the preimage are permuted by the action of the absolute Galois group. More precisely, we obtain a permutation representation $ \operatorname{Gal}(k^{\mathrm{sep}}/k) \rightarrow G$ with $G = S_3$ or $C_3$. Suppose $L$ contains all three points. Then it is $G$-stable and, by descent theory, the line $L \subset X$ is defined over $k$. This yields an invertible sheaf $\mathscr{O}_X(L)$ of degree 1, and $X$ must be split by Proposition~\ref{Non-Split-Chow}---a contradiction. 
	
	Therefore, the points lie in general position. Consequently, the blowing-up $ \operatorname{Bl}_{a}(X)$ is always a del Pezzo surface. Next, consider a point $a \in X$ of degree $d = 6$, with preimage $u^{-1}(a) = \{z_1, z_2, \dots, z_6\}$. It turns out the same is true. The following was first proven in \cite{Weinstein 2022}, albeit for a different purpose. 
	
	\begin{prop}\label{6pts_general_position}
		Let $X$ be a non-split Severi--Brauer surface and $a \in X$ a point of degree $d = 3$ or $d = 6$. Then the points in the preimage $u^{-1}(a)$ lie in general position.
	\end{prop}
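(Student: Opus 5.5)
The degree-3 case was settled in the paragraph preceding the statement, so we treat $d=6$. The preimage $u^{-1}(a)=\{z_1,\dots,z_6\}\subset X_{k^{\mathrm{sep}}}\simeq\mathbb{P}^2_{k^{\mathrm{sep}}}$ consists of six distinct points, since $\kappa(a)$ is separable. The absolute Galois group acts transitively on them through a transitive subgroup $G\subset S_6$. By Theorem~\ref{P^2_del_Pezzo}, with only six points, we must rule out two configurations: three collinear points, and all six on a conic.

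\emph{The conic.} Suppose all six points lie on a conic $Q$. Five points in $\mathbb{P}^2$, no three collinear, determine a unique conic. If three of the points are collinear, we are instead in the line case treated below. So we may assume no three are collinear. Then $Q$ is unique, hence $G$-stable, and by Galois descent it is defined over $k$. This gives an effective divisor of degree $2$ on $X$, contradicting Proposition~\ref{Non-Split-Chow}, which says every curve has degree divisible by $3$.

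\emph{Collinear triples.} This is the main obstacle. Let $\mathcal{L}$ be the set of lines containing at least three of the $z_i$. This set is finite and $G$-stable. A $G$-orbit of lines $L_1,\dots,L_m$ gives the divisor $L_1+\dots+L_m$, which descends to a curve of degree $m$ on $X$. Hence $3\mid m$ for every orbit, and in particular $\mathcal{L}$, if nonempty, has at least $3$ lines. We then do combinatorics on six points.
\begin{enumerate}[(i)]
\item A line containing $\ge 4$ of the points: two distinct such lines would share at least two points, which is impossible. So the line is unique, hence $G$-stable and defined over $k$, a contradiction.
\item Every line in $\mathcal{L}$ contains exactly three points. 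Transitivity of $G$ means every point lies on the same number $t\ge1$ of lines in $\mathcal{L}$. Counting incidences gives $3|\mathcal{L}|=6t$, so $|\mathcal{L}|=2t$. Two lines meet in at most one point, and each point has only five others, so $t\le 2$. Thus $|\mathcal{L}|\in\{2,4\}$.
\item $|\mathcal{L}|=2$ is not divisible by $3$, so it is excluded. For $|\mathcal{L}|=4$, the orbits of $G$ on $\mathcal{L}$ have sizes summing to $4$, each divisible by $3$, which is impossible.
\end{enumerate}
This gives the contradiction.

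The step needing the most care is making the descent argument precise. A $G$-stable reduced curve in $X_{k^{\mathrm{sep}}}$ descends to a closed curve in $X$ whose degree, measured after base change, equals the degree upstairs. Combined with Proposition~\ref{Non-Split-Chow}, this forces the degree to be divisible by $3$. I would phrase this once as a general lemma: every $G$-invariant finite set of lines has cardinality divisible by $3$, and no $G$-invariant conic exists. Both the conic case and the orbit counting above then follow from that lemma.
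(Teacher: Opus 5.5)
Your proof is correct. For the collinearity condition it takes a genuinely different route from the paper.

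The paper proceeds in three steps:
\begin{enumerate}[(1)]
\item It lists the nine combinatorial types \textsc{A}--\textsc{I} of six points containing a collinear triple.
\item To each type it attaches the Galois-stable curve made of all lines through pairs of points, and discards the types whose line count is prime to $3$.
\item It rules out the two survivors by hand: \textsc{B} because it would have a Galois-fixed point, and \textsc{H} because a vertex cannot be sent to a side point.
\end{enumerate}

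You work only with the set $\mathcal{L}$ of lines carrying at least three of the points. A line with $\ge 4$ points is unique and descends. Otherwise transitivity gives $3|\mathcal{L}|=6t$ with $t\le 2$, so $|\mathcal{L}|\in\{2,4\}$, which is never divisible by $3$.

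This uses transitivity once and globally. Configuration \textsc{H} is precisely the type with $|\mathcal{L}|=3$, and your count forces $|\mathcal{L}|$ to be even, so it is excluded. Your argument also spares you from justifying that \textsc{A}--\textsc{I} exhaust all cases.

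Your conic step is more careful than the paper's. By first excluding collinear triples, you make the conic through five of the points unique, hence Galois-stable. The paper asserts Galois-stability of the conic without addressing uniqueness.

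What the paper's enumeration buys is reuse. Its line counts do not depend on transitivity, so they carry over directly to the two-point centre $Z=\{a,a'\}$, where \textsc{H} genuinely occurs. Your count adapts there as well: with valences $t_1,t_2$ on the two orbits one gets $|\mathcal{L}|=t_1+t_2=3$, forcing $\{t_1,t_2\}=\{1,2\}$, i.e.\ the triangle. As written, however, your argument depends on there being a single orbit.
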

	
	\begin{proof}
		The case $d=3$ was established above. For $d= 6$, we follow the same strategy. The goal is to rule out all possibilities that violate the three conditions imposed on the points: no three on a line, no six on a conic, and no seven on a singular cubic with an eighth point as an ordinary double point. 
		
		The last condition is ruled out immediately, as we are only blowing up at six points. The condition that not all six lie on a conic is straightforward; if they all lay on a conic $C$, the curve would be Galois-stable, hence by descent, $C \subset X$ and $\mathscr{O}_X(C)$ is invertible of degree 2. As $X$ is non-split, this cannot happen by Proposition~\ref{Non-Split-Chow}.
		
		The condition that no three points lie on a line is combinatorially more involved. We claim that there are exactly nine additional configurations of six points containing at least one line: 
		\begin{figure}[H]
			\centering
			\begin{tikzpicture}[scale=.55]
				\tikzset{
					dot/.style={circle, fill=black, inner sep=0pt, minimum size=4pt},
					base line/.style={draw=black, thick, shorten <=-3mm, shorten >=-3mm}
				}
				
				
				\begin{scope}[shift={(0,0)}]
					\node[anchor=north west] at (-0.5, 2.5) {\textbf{A}};
					\draw[base line] (0, 0) -- (3.0, 0); 
					\foreach \x in {0, 0.6, 1.2, 1.8, 2.4, 3.0} { 
						\node[dot] at (\x, 0) {}; 
					}
				\end{scope}
				
				\begin{scope}[shift={(5,0)}]
					\node[anchor=north west] at (-0.5, 2.5) {\textbf{B}};
					\draw[base line] (0, 0) -- (2.4, 0);
					\foreach \x in {0, 0.6, 1.2, 1.8, 2.4} { 
						\node[dot] at (\x, 0) {}; 
					}
					\node[dot] at (1.2, 1) {}; 
				\end{scope}
				
				\begin{scope}[shift={(10,0)}]
					\node[anchor=north west] at (-0.5, 2.5) {\textbf{C}};
					\draw[base line] (0, 0) -- (1.8, 0);
					\foreach \x in {0, 0.6, 1.2, 1.8} { \node[dot] at (\x, 0) {}; }
					\node[dot] at (0.3, 1) {};
					\node[dot] at (1.5, 1) {};
				\end{scope}
				
				
				\begin{scope}[shift={(0,-4)}]
					\node[anchor=north west] at (-0.5, 2.5) {\textbf{D}};
					\draw[base line] (0, 0) -- (1.8, 0);
					\foreach \x in {0, 0.6, 1.2, 1.8} { \node[dot] at (\x, 0) {}; }
					\draw[base line] (0, 0) -- (45:1.2);
					\foreach \r in {0.6, 1.2} { \node[dot] at (45:\r) {}; }
				\end{scope}
				
				\begin{scope}[shift={(5,-4)}]
					\node[anchor=north west] at (-0.5, 2.5) {\textbf{E}};
					\draw[base line] (0, 0) -- (1.2, 0);
					\foreach \x in {0, 0.6, 1.2} { \node[dot] at (\x, 0) {}; }
					\node[dot] at (0.0, 1.0) {}; 
					\node[dot] at (0.6, 1.5) {}; 
					\node[dot] at (1.2, 1.2) {}; 
				\end{scope}
				
				\begin{scope}[shift={(10,-4)}]
					\node[anchor=north west] at (-0.5, 2.5) {\textbf{F}};
					\draw[base line] (0, 0) -- (1.2, 0);
					\foreach \x in {0, 0.6, 1.2} { \node[dot] at (\x, 0) {}; }
					\draw[base line] (0, 1.5) -- (1.2, 1.5);
					\foreach \x in {0, 0.6, 1.2} { \node[dot] at (\x, 1.5) {}; }
				\end{scope}
				
				
				\begin{scope}[shift={(0,-8)}]
					\node[anchor=north west] at (-0.5, 2.5) {\textbf{G}};
					\draw[base line] (0, 0) -- (1.2, 0);
					\foreach \x in {0, 0.6, 1.2} { \node[dot] at (\x, 0) {}; }
					\draw[base line] (0, 0) -- (45:1.2);
					\foreach \r in {0.6, 1.2} { \node[dot] at (45:\r) {}; }
					\node[dot] at (1.5, 0.8) {};
				\end{scope}
				
				\begin{scope}[shift={(5,-8)}, scale=0.75]
					\node[anchor=north west] at (-0.5, 3.3) {\textbf{H}}; 
					
					\coordinate (h1) at (0, 0);
					\coordinate (h2) at (2.5, 0);    
					\coordinate (h3) at (1.25, 2.1); 
					
					\draw[base line] (h1) -- (h2);
					\draw[base line] (h2) -- (h3);
					\draw[base line] (h3) -- (h1);
					
					\node[dot] at (h1) {}; \node[dot] at (h2) {}; \node[dot] at (h3) {};
					
					\node[dot] at ($(h1)!0.3!(h2)$) {};
					\node[dot] at ($(h2)!0.7!(h3)$) {};
					\node[dot] at ($(h3)!0.4!(h1)$) {};
				\end{scope}
				
				\begin{scope}[shift={(10,-8)}, scale=0.75]
					\node[anchor=north west] at (-0.5, 3.3) {\textbf{I}}; 
					
					\coordinate (i1) at (0,0);   
					\coordinate (i2) at (1.2,0); 
					\coordinate (i3) at (2.4,0); 
					
					
					\coordinate (p23) at (intersection of i1--{$(i1)+(55:5)$} and i2--{$(i2)+(80:5)$});
					\coordinate (p34) at (intersection of i2--{$(i2)+(80:5)$} and i3--{$(i3)+(145:5)$});
					\coordinate (p24) at (intersection of i1--{$(i1)+(55:5)$} and i3--{$(i3)+(145:5)$});
					
					\draw[base line] (i1) -- (i3);       
					\draw[base line] (i1) -- (p23);      
					\draw[base line] (i2) -- (p23);      
					\draw[base line] (i3) -- (p24);      
					
					\node[dot] at (i1) {}; \node[dot] at (i2) {}; \node[dot] at (i3) {};
					\node[dot] at (p23) {};
					\node[dot] at (p34) {};
					\node[dot] at (p24) {};
				\end{scope}
			\end{tikzpicture}
			
			\caption{Six points with at least one collinear set of points.}
		\end{figure}
		To see that this exhausts all possibilities, we use a combinatorial argument. We start by imposing the condition that $3 \le n \le 6$ points lie on an initial line $L$. 
		\begin{enumerate}[(1)]
			\item If $n=6$ or $n=5$, there is only one possible configuration for each case. These are configurations \textsc{A} and \textsc{B}. 
			\item If $n = 4$, there are two free points. They can either lie on a second line or not. These are configurations \textsc{C} and  \textsc{D}, respectively. 
			\item If $n = 3$, we start by increasing the number of lines with three points. If there are none, we are in configuration \textsc{E}. If there is one additional line, then the only choices are  \textsc{F} and \textsc{G}. There is only one possibility to extend this to a third line, which is \textsc{H}. The same holds for four lines containing three points each, leaving configuration \textsc{I} as the final option. 
		\end{enumerate}
		
		We can define curves as the union of all possible lines through all pairs of points. The degree of the resulting curve is equal to the number of lines. Below, we illustrate the idea for configuration \textsc{D}. 
		
		\begin{figure}[H]
			\centering
			\begin{tikzpicture}[scale=.85]
				\tikzset{
					dot/.style={circle, fill=black, inner sep=0pt, minimum size=5pt},
					black line/.style={draw=black, thick, shorten <=-10mm, shorten >=-5mm},
					red line/.style={draw=red, thick, shorten <=-2mm, shorten >=-2mm},
					lbl/.style={font=\small\bfseries, fill=white, inner sep=1.5pt, circle},
					red lbl/.style={lbl, text=red},
					black lbl/.style={lbl, text=black}
				}
				
				\coordinate (p1) at (0,0);
				\coordinate (p2) at (1.5, 0);
				\coordinate (p3) at (3.0, 0);
				\coordinate (p4) at (4.5, 0);
				
				\coordinate (p5) at (45:2.0); 
				\coordinate (p6) at (45:4.0);
				
				\draw[black line] (p1) -- (p4) node[midway, below=5pt, black lbl] {1};
				\draw[black line] (p1) -- (p6) node[midway, above left=2pt, black lbl] {2};
				
				\draw[red line] (p2) -- (p5) node[midway, red lbl] {3};
				\draw[red line] (p3) -- (p5) node[midway, red lbl] {4};
				\draw[red line] (p4) -- (p5) node[pos=0.65, red lbl] {5};
				
				\draw[red line] (p2) -- (p6) node[midway, red lbl] {6};
				\draw[red line] (p3) -- (p6) node[midway, red lbl] {7};
				\draw[red line] (p4) -- (p6) node[midway, red lbl] {8};
				
				\foreach \p in {p1, p2, p3, p4, p5, p6} {
					\node[dot] at (\p) {};
				}			
			\end{tikzpicture}
			\caption{Configuration \textsc{D} with all points connected.}
		\end{figure} By construction, these curves are Galois-stable and hence descend to curves on $X$ of the same degree. Thus, if the number of lines is coprime to 3, the descended curve forces $X$ to be split. Counting lines in each configuration shows that only \textsc{B} and \textsc{H} are not yet ruled out:
		
		\begin{table}[H]
			\centering
			\begin{tabular}{l|ccccccccc}
				Configuration 
				& \textsc{A} &  \textsc{B} &  \textsc{C} &  \textsc{D} & \textsc{E} &  \textsc{F} &  \textsc{G} &  \textsc{H} & \textsc{I} \\ \hline
				Number of lines 
				& 1 & \underline{6} & 10 & 8  & 13 & 11 &11  & \underline{9}  &7
			\end{tabular}
		\end{table} 
		
		Let $u: X_{k^{\mathrm{sep}}} \rightarrow X$, and let $u^{-1}(a) = \{z_1, \ldots, z_6\}$ be the points in the preimage.	Suppose the points lie in configuration \textsc{B}. There is a unique line $L$ containing five of the points, say $z_1, \dots, z_5$. As $z_6 \notin L$, the point is Galois stable. Therefore, it descends to a $k$-rational point in $X$---a contradiction.
		
		Finally, suppose \textsc{H} occurs. Denote by $L_{12}, L_{23}, L_{13}$ the respective lines, which must be permuted by some $\sigma \in G$. In order to descend to a closed point of degree 6, the action must be transitive. But it is impossible for a vertex point to be moved to a midpoint and vice versa. Therefore, the group action cannot be transitive, and configuration \textsc{H} is ruled out.
	\end{proof}
	
	Let us record this as a sufficient condition for the blowing-up of $X$ to be a del Pezzo surface:
	
	\begin{cor}
		Let $S = \operatorname{Bl}_{a}(X) \rightarrow X$ be the blowing-up of a non-split Severi--Brauer surface at a closed point $a \in X$ of degree 3 or 6. Then $S$ is a del Pezzo surface.
	\end{cor}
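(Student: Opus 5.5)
The corollary should follow from Proposition~\ref{6pts_general_position} together with the two base-change lemmas. First, I would record that $a$ has separable residue field. Strictly speaking, this is an extra hypothesis: the statement needs it, or it must be read as implicit. Proposition~\ref{Smooth_Blowing_Up} shows it is necessary for smoothness, and the standing conventions of this section assume it. Granting separability, the preimage $u^{-1}(a)\subset X_{k^{\mathrm{sep}}}\simeq\mathbb{P}^2_{k^{\mathrm{sep}}}$ of $u\colon X_{k^{\mathrm{sep}}}\to X$ is reduced. It therefore consists of $d=3$ or $d=6$ distinct $k^{\mathrm{sep}}$-rational points. This is the step that uses separability, and it is what lets us apply the classical theorem.

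Next, blowing-up commutes with flat base change, so $S_{k^{\mathrm{sep}}}=\operatorname{Bl}_{u^{-1}(a)}(\mathbb{P}^2_{k^{\mathrm{sep}}})$. By Proposition~\ref{6pts_general_position}, the points of $u^{-1}(a)$ lie in general position. Since $d\le 6<9$, Theorem~\ref{P^2_del_Pezzo} shows that $S_{k^{\mathrm{sep}}}$ is a del Pezzo surface. In particular, $S_{k^{\mathrm{sep}}}$ is smooth and proper, and $\omega_{S_{k^{\mathrm{sep}}}}^{-1}$ is ample.

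Finally, I would descend these properties to $S$. Properness of $S$ follows because $b\colon S\to X$ is projective. Smoothness descends because it can be checked after a field extension. For anti-ampleness, formation of the dualizing sheaf commutes with flat base change, so $\omega_{S_{k^{\mathrm{sep}}}}=(\omega_S)_{k^{\mathrm{sep}}}$. The ampleness lemma quoted above then shows that $\omega_S^{-1}$ is ample. Hence $S$ is a del Pezzo surface.

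All of the substance sits in Proposition~\ref{6pts_general_position}, so no single step here is a serious obstacle. The one point needing care is the compatibility of $\omega$ with base change, since $S$ is only known to be Gorenstein after smoothness has been established. I would handle this by proving smoothness first and only then identifying the canonical sheaves.
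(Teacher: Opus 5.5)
Your proposal is correct and takes the paper's route: Proposition~\ref{6pts_general_position} puts $u^{-1}(a)$ in general position over $k^{\mathrm{sep}}$, Theorem~\ref{P^2_del_Pezzo} then makes the base-changed blow-up del Pezzo, and the flat-base-change and ampleness lemmas descend this to $S$. Your remark that separability of $\kappa(a)$ must be assumed is also right: the paper imposes it only implicitly, via Proposition~\ref{Smooth_Blowing_Up} at the start of Section~2. You also spell out the descent of properness, smoothness and $\omega_S$, which the paper leaves unstated.
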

	
	Note that this not necessarily true in the split case. Indeed, if $a \in \mathbb{P}^2_k$ is a degree--3 point, it may very well be contained in a line $L \simeq \mathbb{P}^1_k$ defined over $k$, in contrast to non-split surfaces. When this happens, the points in the preimage $u^{-1}(a) = \{z_1, z_2, z_3\} \subset L_{k^{\mathrm{sep}}}$ become collinear. 
	
	At the end of the paper \cite{Weinstein 2022}, it is suggested by Ekedahl that this proof simplifies significantly via the Nakai--Moishezon criterion, as the Picard rank is two. I have not pursued this here, since this argument ceases to work in the remaining case. Namely, we can also consider the blowing-up at two closed points $ Z = \{a, a^{\prime}\}$ of degree 3 each. After base change, there are $2 \times 3 = 6$ points in the preimage 
	\[ u^{-1}(\{a, a^{\prime}\}) =  \{z_1, z_2, z_3\} \cup \{ z_1^{\prime}, z_2^{\prime}, z_3^{\prime} \}  \subset X_{k^{\mathrm{sep}}}. \] Again, we ask whether these points lie in general position. By the same argument as in the preceding proof, only configurations \textsc{B} and \textsc{H} need further investigation. Configuration \textsc{B} cannot occur, since there would be a fixed point. On the other hand, in configuration \textsc{H}, something interesting happens:
	
	\begin{figure}[H]
		\centering
		\resizebox{.3\textwidth}{!}{
			\begin{tikzpicture}[
				>=stealth,
				thick,
				dot/.style={circle, fill=black, inner sep=0pt, minimum size=4pt},
				red dot/.style={circle, fill=red, inner sep=0pt, minimum size=3pt},
				base line/.style={thick, shorten <=-0.5cm, shorten >=-0.5cm},
				red dashed line/.style={thick, red, dashed, shorten <=-0.2cm, shorten >=-0.2cm}
				]
				
				\begin{scope}[shift={(5,-8)}, scale=0.75]
					
					\coordinate (z3) at (0, 0);     
					\coordinate (z1) at (2.5, 0);   
					\coordinate (z2) at (1.25, 2.1); 
					
					\coordinate (z3_p) at ($(z3)!0.3!(z1)$); 
					\coordinate (z1_p) at ($(z1)!0.7!(z2)$); 
					\coordinate (z2_p) at ($(z2)!0.4!(z3)$); 
					
					\draw[base line] (z3) -- (z1);
					\draw[base line] (z1) -- (z2);
					\draw[base line] (z2) -- (z3);
					
					\draw[red dashed line] (z1_p) -- (z2_p);
					\draw[red dashed line] (z2_p) -- (z3_p);
					\draw[red dashed line] (z3_p) -- (z1_p);
					
					\node[dot] at (z3) {}; \node[below left=4pt] at (z3) {$z_3$};
					\node[dot] at (z1) {}; \node[below right=4pt] at (z1) {$z_1$};
					\node[dot] at (z2) {}; \node[above=4pt] at (z2) {$z_2$};
					
					\node[red dot] at (z3_p) {}; \node[below=7pt, red] at (z3_p) {$z_3'$};
					\node[red dot] at (z1_p) {}; \node[right=7pt, red] at (z1_p) {$z_1'$};
					\node[red dot] at (z2_p) {}; \node[left=7pt, red]  at (z2_p) {$z_2'$};
					
				\end{scope}
				
			\end{tikzpicture}
		}
		\caption{Configuration  \textsc{H} with both triangles drawn.}
		\label{fig:triangle-arbitrary-points}
		
	\end{figure}
	
	The points in the preimages $u^{-1}(a) = \{z_1, z_2, z_3\}$ and $u^{-1}(a^{\prime}) = \{z_1^{\prime}, z_2^{\prime}, z_3^{\prime} \}$ inherit a transitive group action by $G = \operatorname{Gal}(k^{\mathrm{sep}}/k)$. Any $\sigma \in G$ can act on each triple, respectively, as a transposition, a cyclic permutation, or trivially. The crucial point is that, because of the geometry of the points and lines, there is a $G$-equivariant bijection between $u^{-1}(a)$ and $u^{-1}(a^{\prime})$. In other words, they are isomorphic as finite, continuous $G$-sets. Thus, in the language of Grothendieck's algebraic fundamental groups (see \cite[0BND]{Stacks}), the spectra of $\kappa(a)$ and $\kappa(a^{\prime})$ are isomorphic as étale covers of $\operatorname{Spec}(k)$. Put differently, $\kappa(a) \simeq \kappa(a^{\prime})$ as étale $k$-algebras.
	
	This implies that, for non-isomorphic residue fields, configuration \textsc{H} cannot occur. We have thus shown the following
	
	\begin{prop}\label{non-isomorphic_residue_fields}
		Let $X$ be a non-split Severi--Brauer surface, and let $Z = \{a, a^{\prime}\}$ consist of two degree-3 points. If $\kappa(a) \not\simeq \kappa(a^{\prime})$, then the points in $u^{-1}(Z)$ lie in general position, and $\operatorname{Bl}_Z(X)$ is a del Pezzo surface.
	\end{prop}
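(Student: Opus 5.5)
The plan is to verify the three conditions of Theorem~\ref{P^2_del_Pezzo} for the six points $u^{-1}(Z)=\{z_1,z_2,z_3\}\cup\{z_1',z_2',z_3'\}$ in $X_{k^{\mathrm{sep}}}\simeq\mathbb{P}^2_{k^{\mathrm{sep}}}$. After that, the two base-change lemmas (blowing-up commutes with flat base change, and ampleness can be checked over $k^{\mathrm{sep}}$) show that $\operatorname{Bl}_Z(X)$ is del Pezzo. Separability of the residue fields is already guaranteed by Proposition~\ref{Smooth_Blowing_Up}, so $u^{-1}(Z)$ really consists of six reduced geometric points. Since $\deg Z = 6 < 9$, only three things need checking.

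First, the cubic condition is vacuous, because there are only six points. Second, a conic through all six points would be Galois-stable, because $u^{-1}(Z)$ is. By descent it would give a curve of degree $2$ on $X$, which contradicts Proposition~\ref{Non-Split-Chow}. This is exactly as in the proof of Proposition~\ref{6pts_general_position}.

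For collinearity I reuse the combinatorial classification A--I. Taking the union of all lines through pairs of points gives a Galois-stable curve. It descends to $X$, so its degree must be divisible by $3$. By the table, only configurations \textsc{B} and \textsc{H} survive.

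\begin{enumerate}[(1)]
\item In \textsc{B}, the point off the unique $5$-point line is fixed by the Galois group. It would therefore descend to a $k$-rational point, which a non-split $X$ does not have.
\item In \textsc{H}, the three vertices are distinguished from the three edge points, since vertices lie on two of the three lines and edge points on only one. So each Galois orbit lies inside either the vertex set or the edge-point set. Each of $u^{-1}(a)$ and $u^{-1}(a')$ is a single orbit of size $3$, so one triple is the vertex set and the other is the edge-point set. Sending a vertex to the edge point on the opposite line is then a bijection $u^{-1}(a)\to u^{-1}(a')$. It is Galois-equivariant, because Galois permutes the three lines compatibly with both sets.
\end{enumerate}

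Via the equivalence between finite étale $k$-algebras and finite continuous $\operatorname{Gal}(k^{\mathrm{sep}}/k)$-sets, this equivariant bijection gives $\kappa(a)\simeq\kappa(a')$, contradicting the hypothesis. Hence no three points are collinear, and the points lie in general position.

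The main obstacle is making the \textsc{H} step rigorous. One must check that the lines of \textsc{H} are intrinsic, namely exactly the lines containing three of the six points, so that Galois permutes them. One must also check that the vertex-to-opposite-edge-point map commutes with this action. I would verify this by noting that each edge point lies on exactly one line, that each vertex lies on exactly two lines, and that these two lines miss exactly one edge point.
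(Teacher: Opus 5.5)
Your proposal is correct and is essentially the paper's argument: reuse the \textsc{A}--\textsc{I} classification with the line count modulo $3$, discard \textsc{B} via a Galois-fixed point, and in \textsc{H} obtain a Galois-equivariant bijection $u^{-1}(a)\to u^{-1}(a')$, which forces $\kappa(a)\simeq\kappa(a')$; your vertex-to-opposite-edge-point map makes explicit the bijection the paper only asserts. One correction in wording: Proposition~\ref{Smooth_Blowing_Up} does not guarantee separability, it only shows separability is necessary, so separability is really a standing hypothesis here (as in the paper's set-up of Section~2) rather than a consequence.
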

	
	In order to understand the case $\kappa(a) \simeq \kappa(a^{\prime})$, we need to look at the respective triangle of lines. Each such triangle descends to a curve in $X$. One can think of these curves as being attached to the points, or their residue fields, as follows:
	
	\begin{definition}
		Let $X$ be a non-split Severi--Brauer surface. Let $a \in X$ be a closed point of degree 3, with preimage $u^{-1}(a) = \{z_1, z_2, z_3\}$ and lines $L_{12}, L_{23}, L_{13} \subset X_{k^{\mathrm{sep}}}$ through each pair of points, stable under the action of the Galois group. The curve $C_a \subset X$ defined by descending the triangle is the \textit{nodal curve associated with} $\kappa(a)$. 
	\end{definition}
	
	We write $C \subset X$ when it is clear which $a \in X$ is meant. The name is justified by the following proposition, which also records some more properties of the curve:
	
	\begin{prop}\label{nodal_curve}
		Let $a \in X$ and $C \subset X$ as above. 
		
		\begin{enumerate}[(i)]
			\item The curve $C$ is a geometrically reduced curve of genus 1, whose geometric irreducible components are lines. Further, it is nodal with unique singularity $a$.  
			\item Every closed point $a^{\prime} \in C$ of degree 3 has residue field $\kappa(a^{\prime}) \simeq \kappa(a)$.
		\end{enumerate}	
	\end{prop}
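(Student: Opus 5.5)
The plan is to verify every assertion after base change along $u\colon X_{k^{\mathrm{sep}}}\to X$, where the curve is the explicit triangle $T=L_{12}\cup L_{23}\cup L_{13}$, and then to descend the conclusions. First I check that $C$ is well-defined. For $\sigma\in\operatorname{Gal}(k^{\mathrm{sep}}/k)$ we have $\sigma(L_{ij})=L_{\sigma(i)\sigma(j)}$, because $L_{ij}$ is the unique line through $z_i$ and $z_j$. Hence $T$ is Galois-stable and descends to a curve $C\subset X$ with $C_{k^{\mathrm{sep}}}=T$. By the discussion preceding Proposition~\ref{6pts_general_position}, the points $z_1,z_2,z_3$ are not collinear. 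So the three lines are pairwise distinct, $T$ is a genuine plane cubic, and any two of the lines meet transversally in exactly one of the $z_i$.

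For (i), I would argue as follows.
\begin{enumerate}[(a)]
\item Geometric reducedness can be tested over $\bar k$. There $T_{\bar k}$ is a union of three distinct reduced lines, hence reduced, and its geometric irreducible components are these lines.
\item The arithmetic genus is invariant under field extension. A plane cubic has $p_a=(3-1)(3-2)/2=1$, so $C$ has genus $1$.
\item The smooth locus of a finite-type $k$-scheme is compatible with base change to $k^{\mathrm{sep}}$. Therefore $u^{-1}(\operatorname{Sing}C)=\operatorname{Sing}T$.
\item The singular points of a triangle of distinct lines are exactly the three pairwise intersections $z_1,z_2,z_3$. At each of them two transversal lines meet, so each is an ordinary double point.
\item Since $u^{-1}(\operatorname{Sing}C)=\{z_1,z_2,z_3\}=u^{-1}(a)$, we get $\operatorname{Sing}C=\{a\}$, and $C$ is nodal there.
\end{enumerate}

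For (ii), let $a'\in C$ be a closed point of degree $3$. If $a'=a$ there is nothing to prove, so assume $a'\neq a$. By the separability assumption, $u^{-1}(a')=\{z_1',z_2',z_3'\}$ consists of three points, and by (i) none of them is a vertex $z_i$. Each $z_j'$ therefore lies on exactly one line of $T$. This gives a map
\[
\varphi\colon u^{-1}(a')\longrightarrow\{L_{12},L_{23},L_{13}\}
\]
sending $z_j'$ to the unique line containing it. The map $\varphi$ is $G$-equivariant, because the Galois action preserves incidence.

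The main obstacle is showing that $\varphi$ is bijective. I would use that $G$ acts transitively on $u^{-1}(a')$, since $a'$ is a closed point. The fibres of an equivariant map from a transitive $G$-set all have the same cardinality, and here that common cardinality must divide $3$. A fibre of size $3$ would mean all three points lie on one line. That line would then be Galois-stable and would descend to a line on $X$, giving an invertible sheaf of degree $1$. This contradicts Proposition~\ref{Non-Split-Chow}. So every fibre has size $1$ and $\varphi$ is a $G$-equivariant bijection.

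Next, the assignment $L_{ij}\mapsto z_k$ with $\{i,j,k\}=\{1,2,3\}$ (the line goes to the opposite vertex) is a $G$-equivariant bijection from the set of lines to $u^{-1}(a)$. Composing with $\varphi$ shows that $u^{-1}(a')\simeq u^{-1}(a)$ as finite continuous $G$-sets. By the Galois correspondence for étale $k$-algebras, as invoked before Proposition~\ref{non-isomorphic_residue_fields}, this gives $\kappa(a')\simeq\kappa(a)$.
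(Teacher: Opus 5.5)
Your proof is correct, but it takes a different route from the paper in both parts.

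\textbf{Part (i).} The paper computes multiplicities with formula \eqref{multiplicity}. Over $a$ the scheme $\tilde C\cap E$ has six geometric points, so $\operatorname{mult}_a(C)=6/3=2$, and at the other points it finds multiplicity $1$. You instead use that the smooth locus commutes with base change, and then read off the singular locus of the triangle. This gives smoothness, not merely regularity, at every closed point other than $a$, whatever its degree.

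\textbf{Part (ii).} The paper passes to the normalization $\hat C\to C$ and its field of constants $\hat k=H^0(\hat C,\mathscr{O}_{\hat C})$. Since $\hat C_{k^{\mathrm{sep}}}$ is three disjoint lines permuted transitively, $\hat k$ is a separable cubic extension. It is identified with $\kappa(a)$ by the same $G$-set matching you use (lines $\leftrightarrow$ opposite vertices). For $a'\neq a$ the normalization is a local isomorphism at $a'$, so $k\subset\hat k\subset\kappa(a')$, and a degree count gives $\kappa(a')=\hat k\simeq\kappa(a)$.

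You bypass the normalization and match the geometric points over $a'$ with the lines directly. What each route buys:
\begin{enumerate}[(a)]
\item The paper's packaging gets ``one geometric point on each line'' for free from the $\hat k$-algebra structure of $\kappa(a')$.
\item It also never needs to assume $\kappa(a')$ separable; separability comes out of the argument.
\item Your version stays entirely within finite $G$-sets and is more elementary.
\end{enumerate}

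\textbf{Two small remarks.}

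First, you invoke ``the separability assumption'' for $\kappa(a')$, but the proposition does not assume it. This is easy to repair within your framework. A non-separable cubic extension is purely inseparable, so $u^{-1}(a')$ would be a single $G$-fixed point of $C_{k^{\mathrm{sep}}}$ that is not a vertex. That point lies on exactly one line, which would then be $G$-fixed. This is impossible, because your opposite-vertex bijection identifies $\{L_{12},L_{23},L_{13}\}$ with $u^{-1}(a)$, so $G$ permutes the three lines transitively.

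Second, the same observation shortens your ``main obstacle''. A $G$-equivariant map from a nonempty $G$-set into a transitive $G$-set is automatically surjective. So $\varphi$ is a surjection between three-element sets, hence a bijection, with no need for the fibre count or Proposition~\ref{Non-Split-Chow}.
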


	\begin{proof}
		\begin{enumerate}[(i)]
			\item The first part is clear. By definition (\cite[0C47]{Stacks}), $a \in C$ is a node if there is a geometric ordinary double point mapping to it. To see this, notice that the points in the preimage $u^{-1}(a) = \{z_1, z_2, z_3\} \subset C_{k^{\mathrm{sep}}}$ lie on two irreducible lines. That is, each point $z_i$ in the preimage has a local description $\widehat{\mathscr{O}}_{C_{k^{\mathrm{sep}}}, z_i} = k^{\mathrm{sep}}[[x,y]]/(x,y)$. Let us compute the multiplicity of $C$ at $a$ via (\refeq{multiplicity}). We have
			\[\operatorname{mult}_p(C) = h^0(\mathscr{O}_{\tilde{C}\cap E}) [\kappa(a):k]^{-1}, \] where $E$ is the exceptional divisor. This can be computed after base change. The intersection scheme geometrically consists of 6 points, hence $\operatorname{mult}_a(C) = 6 \cdot 3^{-1} = 2$. Choosing any other closed point $a^{\prime} \in C$ with $a \neq a^{\prime}$, the resulting intersection scheme consists only of three points, hence $\operatorname{mult}_{a^{\prime}}(C) = 3 \cdot 3^{-1} = 1$, and $a^{\prime}$ is regular.
			
			\item Consider the normalization $\hat{C} \rightarrow C$. Let $\hat{k} = H^0(\hat{C}, \mathscr{O}_{\hat{C}})$. Geometrically, $\hat{C}_{k^{\mathrm{sep}}}$ is a union of three lines, with transitive Galois action. Equivalently, $C$ is irreducible over $k$. It follows that $k \subset \hat{k}$ is a separable, degree-3 extension. The same argument as in Proposition~\ref{non-isomorphic_residue_fields} applies: Since the preimage $u^{-1}(a) = \{z_1, z_2, z_3\}$ of the node is stabilized by the same Galois action, it follows that $\kappa(a) \simeq \hat{k}$. 
			
			Any other degree 3 point $a^{\prime} \in C$ lies in the smooth locus. Since the normalization map is a local isomorphism, we get a unique closed point $\hat{a^{\prime}} \in \hat{C}$ mapping to $a^{\prime}$, with isomorphic residue fields. We get an inclusion $k \hookrightarrow \hat{k} \hookrightarrow \kappa(a^{\prime})$. As $[\kappa(a^{\prime}): k] = 3$, and $[\hat{k}:k] = 3$, this yields $[\kappa(a^{\prime}):\hat{k}] = 1$. We conclude $\kappa(a^{\prime}) \simeq \kappa(a)$. \qedhere
		\end{enumerate}
	\end{proof}
	
	Note that Proposition~\ref{nodal_curve}(ii) gives another proof of Proposition~\ref{non-isomorphic_residue_fields}. We are ready to state and prove the main theorem. Putting it all together, when $Z = \{a_1, \ldots, a_r\} \subset X$ is a reduced closed subscheme consisting of closed points, we obtain the following
	
	\begin{theorem}\label{main_theorem}
		Let $X$ be a non-split Severi--Brauer surface over $k$. The blowing-up $\operatorname{Bl}_Z(X) \rightarrow X$ along the centre $Z$ is a del Pezzo surface if and only if the residue field of each point is separable and one of the following holds: 
		\begin{enumerate}[(i)]
			\item $Z = \{a\}$ consists of a degree-3 or 6 point. 
			\item $Z = \{a, a^{\prime}\}$ consists of two degree-3 points, with $\kappa(a) \not\simeq \kappa(a^{\prime})$. 
			\item $Z = \{a, a^{\prime}\}$ consists of two degree-3 points, with $\kappa(a) \simeq \kappa(a^{\prime})$ such that $a^{\prime} \notin C_a$ and $a \notin C_{a^\prime}$.
		\end{enumerate}
	\end{theorem}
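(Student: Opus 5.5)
The plan is to assemble the earlier results and then treat the one genuinely new case, $\kappa(a)\simeq\kappa(a')$, by relating configuration \textsc{H} to the curves $C_a$ and $C_{a'}$. By the two lemmas on flat base change and ampleness, $\operatorname{Bl}_Z(X)$ is del Pezzo if and only if $u^{-1}(Z)$ lies in general position in $X_{k^{\mathrm{sep}}}\simeq\mathbb{P}^2_{k^{\mathrm{sep}}}$, which is Theorem~\ref{P^2_del_Pezzo}.

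\textbf{Necessity.} If some residue field is inseparable, Proposition~\ref{Smooth_Blowing_Up} shows $\operatorname{Bl}_Z(X)$ is not smooth, hence not del Pezzo. Assume separability. Proposition~\ref{Non-Split-Chow} gives $3\mid\deg a_i$, and Proposition~\ref{degree_bounded} gives total degree at most $8$. Hence $Z$ is one point of degree $3$ or $6$, or two points of degree $3$. In case (i), Proposition~\ref{6pts_general_position} applies. When $\kappa(a)\not\simeq\kappa(a')$, Proposition~\ref{non-isomorphic_residue_fields} applies.

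\textbf{The case $\kappa(a)\simeq\kappa(a')$.} Here I must show that $u^{-1}(Z)$ fails to be in general position exactly when $a'\in C_a$ or $a\in C_{a'}$. As in the discussion before Proposition~\ref{non-isomorphic_residue_fields}, only configuration \textsc{H} can occur: \textsc{B} would force a rational point, and the other configurations are excluded by counting lines. The conic condition is excluded by the descent argument. The cubic condition is vacuous for six points. So the task reduces to showing that configuration \textsc{H} occurs if and only if one triple lies on the triangle of the other.

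For the "if" direction, suppose $a'\in C_a$. Then $u^{-1}(a')\subset C_{a,k^{\mathrm{sep}}}=L_{12}\cup L_{23}\cup L_{13}$. No $z_j'$ is a vertex $z_i$, since $a\neq a'$. Each $z'_j$ therefore lies on exactly one side of the triangle. The Galois group permutes the $z'_j$ transitively and acts compatibly on the sides, and I will use this to show that the three $z'_j$ lie on distinct sides. The step I expect to be most delicate is excluding two $z'_j$ on the same side. If $L_{12}$ contained $z'_1,z'_2$, then by transitivity every side would contain two of the $z'_j$, which is impossible with only three points unless some point lay on two sides, i.e. were a vertex. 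A cleaner route is to count: the multiplicity calculation of Proposition~\ref{nodal_curve} gives that $\tilde C$ meets each $E$ in $3$ geometric points. Either way, the sides then contain $z_1,z_2$ and one $z'$, and so on. This gives three collinear triples, which is configuration \textsc{H} and violates general position. The case $a\in C_{a'}$ is symmetric.

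For the "only if" direction, suppose configuration \textsc{H} occurs. The three lines of \textsc{H} form a Galois-stable triangle. Its three vertices form a Galois orbit, which is either $u^{-1}(a)$ or $u^{-1}(a')$, since vertices cannot be moved to midpoints. If the vertices are the $z_i$, the lines are $L_{12},L_{23},L_{13}$ and the midpoints lie on them, so $u^{-1}(a')\subset C_{a,k^{\mathrm{sep}}}$. By descent this means $a'\in C_a$. Otherwise $a\in C_{a'}$.
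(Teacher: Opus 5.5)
Your proof is correct and follows the paper's route: base change to $k^{\mathrm{sep}}$, separability, the degree count, and elimination of every configuration except \textsc{H}. In the case $\kappa(a)\simeq\kappa(a')$ you go further than the paper, which only asserts with a reference to Proposition~\ref{nodal_curve}(ii) that \textsc{H} occurs iff $a'\in C_a$ or $a\in C_{a'}$; you actually prove this via the Galois-stable vertex set.

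The step you flag as delicate is superfluous: a single $z'_j$ on a side $L_{ik}$ already makes $z_i,z_k,z'_j$ collinear. Also, your ``cleaner route'' via multiplicities only shows that each $z'_j$ lies on exactly one side, not that the three lie on distinct sides.
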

	
	\begin{proof}
		Denote by $u: X_{k^{\mathrm{sep}}} \rightarrow X$ the base change to the separable closure, where $X_{k^{\mathrm{sep}}} \simeq \mathbb{P}^2_{k^{\mathrm{sep}}}$. Since blowing-up commutes with flat base-change, we are tasked with finding conditions under which the points in the preimage $u^{-1}(Z)$ lie in general position, in the sense of Theorem~\ref{P^2_del_Pezzo}. 
		
		In Proposition~\ref{Smooth_Blowing_Up}, we have seen that all points $a_i \in Z$ must have separable residue fields, since otherwise the resulting blowing-up is non-smooth. The degree of each point is a multiple of 3, and the total degree is at most 8, by Proposition~\ref{Non-Split-Chow} and Proposition~\ref{degree_bounded}, respectively. This leaves the possibilities of blowing up at $Z = \{a\}$ consisting of a single point of degree 3 or 6, or at two points $Z = \{a,a^{\prime}\}$ of degree 3 each. By Proposition~\ref{6pts_general_position}, the points in the preimage of $Z = \{a\}$ always lie in general position. Consequently, the blowing-up is always a del Pezzo surface. 
		
		When blowing up at the centre $ Z = \{a, a^{\prime}\}$ with points of degree 3, we have shown that the six points in the fibre are in general position if and only if they do not lie in configuration \textsc{H}, as seen in the proof of Proposition~\ref{6pts_general_position}. If $\kappa(a) \not\simeq \kappa(a^{\prime})$, this is always the case, by Proposition~\ref{non-isomorphic_residue_fields}. Finally, if the residue fields are isomorphic, then by Proposition~\ref{nodal_curve}(ii), the necessary and sufficient condition is $a^{\prime} \notin C_{a}$ and, by symmetry, $a\notin C_{a^{\prime}}$.
	\end{proof}


\end{document}